\numberwithin{equation}{section}
\newtheorem{thm}{Theorem}[section]
\newtheorem{lem}[thm]{Lemma}
\newtheorem{rem}{Remark}[section]
\newcommand{\eq}[1]{(\ref{#1})}
\renewcommand{\Re}{\operatorname{\rm Re}}
\renewcommand{\Im}{\operatorname{\rm Im}}
\newcommand{\beqast}{\begin{eqnarray*}}
\newcommand{\eqast}{\end{eqnarray*}}
\newcommand{\beqa}{\begin{eqnarray}}
\newcommand{\eqa}{\end{eqnarray}}
\newcommand{\bbe}{\begin{equation}}
\newcommand{\ee}{\end{equation}}
\newcommand{\sbr}{\smallbreak}
\newcommand{\bfo}{{\bf 1}}
\newcommand{\bE}{{\mathbb E}}
\newcommand{\bR}{{\mathbb R}}
\newcommand{\bC}{{\mathbb C}}
\newcommand{\bZ}{{\mathbb Z}}
\newcommand{\cC}{{\mathcal C}}
\newcommand{\eps}{\epsilon}
\newcommand{\al}{\alpha}
\newcommand{\be}{\beta}
  \newcommand{\sg}{\sigma}
\newcommand{\la}{\lambda}
\newcommand{\lp}{\lambda_+}
\newcommand{\lm}{\lambda_-}
\newcommand{\La}{\Lambda}
\newcommand{\om}{\omega}
\newcommand{\ze}{\zeta}
\newcommand{\ga}{\gamma}
\newcommand{\gap}{\gamma^+}
\newcommand{\gam}{\gamma^-}
\newcommand{\Ga}{\Gamma}
\newcommand{\Cp}{{C_+}}
\newcommand{\Cm}{{C_-}}
\newcommand{\cp}{{c_+}}
\newcommand{\cm}{{c_-}}
\begin{document}

\title
[Conformal accelerations method]
{Conformal accelerations method and efficient evaluation of stable distributions, revisited}

\author[
Svetlana Boyarchenko and
Sergei Levendorski\u{i}]
{
Svetlana Boyarchenko and
Sergei Levendorski\u{i}}

\thanks{In this revised version, the scheme for the polynomial acceleration is improved,
and wrong numbers in several lines of Table 1 are corrected.\\
\emph{S.B.:} Department of Economics, The
University of Texas at Austin, 1 University Station C3100, Austin,
TX 78712--0301, {\tt sboyarch@eco.utexas.edu} \\
\emph{S.L.:}
Calico Science Consulting. Austin, TX.
 Email address: {\tt
levendorskii@gmail.com}}

\begin{abstract}
We introduce new efficient integral representations and methods for evaluation of
pdfs, cpds and quantiles of stable distributions.
For wide regions in the parameter space, absolute errors of order $10^{-15}$ can be achieved
in 0.005-0.1 msec (Matlab implementation), even when the index of the distribution is small or close to 1. 
 For the calculation of quantiles in  wide regions in the tails using
the Newton or bisection method, it suffices to precompute several hundred values of 
the characteristic exponent
at points of an appropriate grid (conformal principal components) and use these values in formulas for cpdf and pdf,
which require a fairly small number of elementary operations.
The methods of the paper are applicable to other classes of integrals, especially highly oscillatory ones, and are typically faster than the popular methods.

\end{abstract}
\maketitle

\vskip1cm\noindent
{\em Key words:} stable L\'evy processes, 
Monte-Carlo simulations, signal processing, conformal acceleration, sinh-acceleration, simplified conic trapezoid rule,conformal principal components

\section{Introduction}\label{intro}
Stable L\'evy processes and densities appear in various fields of natural sciences, engineering and finance. 
 See, e.g., 
\cite{Chandrasekhar,Chavanis09,Mand1,Mand2,SamorodnitskyTaqqu94,Nolan97,Nolan98,Nolan03,NikiasShao95,SignalProc10,AmentONeil18} and the bibliographies therein.
Recall that
the L\'evy density of a one-dimensional stable L\'evy process $X$ of index $\al\in (0,2)$ is  of the form
\beqa\label{SLLdens}
\nu(dy)&=&|y|^{-\al-1}(\cp \bfo_{(0,+\infty)}(y)
+\cm \bfo_{(-\infty,0)}(y))dy,
\eqa
where $c_\pm\ge 0$ and $\cp+\cm>0$. The L\'evy-Khintchine formula
\bbe\label{LKh}
\psi(\xi)=\int_\bR (1-e^{iy\xi}+iy\xi \bfo_{(-1,1)}(y))\nu(dy)-ib\xi,
\ee
for the characteristic exponent $\psi$ of a pure jump L\'evy process, definable from $\bE\left[e^{iX_t\xi}\right]=e^{-t\psi(\xi)}, \xi\in\bR$, allows one to calculate the characteristic exponent
$\psi_{st}$ of a stable L\'evy process. 
If $\al\neq 1$, then $\psi_{st}(\xi)=-i\mu\xi+\psi^0_{st}(\al, \Cp, \xi)$, where $\mu$ can be expressed in terms of $\al$, $c_\pm$ and $b$,
\bbe\label{SLnuneq10}
\psi^0_{st}(\xi)=\Cp |\xi|^\al\bfo_{(0,+\infty)}(\xi)+ \Cm|\xi|^\al\bfo_{(-\infty,0)}(\xi),
\ee
$\Cm=\overline{\Cp}$, $\bar z$ denotes the complex conjugate to $z$,
and 
$\Cp=\Cp(\al,\cp,\cm)$ is given by
\beqa\label{Cp}
\Cp&=&-\cp\Ga(-\al)e^{-i\pi\al/2}-\cm\Ga(-\al)e^{i\pi\al/2}.
\eqa
 With $\al=2$ and $C_\pm=\sg^2/2$, we have the characteristic exponent of the Brownian motion (BM).
 If $\al=1$, then the formula for $\psi^0_{st}$ is more involved:
  \bbe\label{SLnu1}
\psi^0_{st}(\xi)=\sg|\xi|(1+i(2\be/\pi)\,\mathrm{sign}\,\xi\ln|\xi|),
\ee
where $\sg=(\cp+\cm)\pi/2$, $\be=(\cp-\cm)/(\cp+\cm)$. This is a version of Zolotarev's  parametrizations \cite{Zolotarev}
for stable processes of index 1; for processes of index $\al\neq 1$, the corresponding
parametrization
 used in \cite{SamorodnitskyTaqqu94} is
 \beqa\label{SLnuneq1}
\psi^0_{st}(\xi)&=&
\sg^\al(1-i\be\tan(\al\pi/2)\,\mathrm{sign}\,\xi)|\xi|^\al,
\eqa
where $\sg>0$ and $\be\in [-1,1]$ are related to $c_\pm$ as follows:
\beqa\label{cpcmsgbe}
\sg^\al&=&-\Ga(-\al)\cos(\al\pi/2)(\cp+\cm),\ \be=\frac{\cp-\cm}{\cp+\cm}.
\eqa
For our approach to the calculation of pdf $p$, cpdf $F$, quantiles,
and other quantities related to $X_1$ (we normalize $t$ to 1), it is important 
to regard $p$, $p'$ and $F$ as functions of $x'=x-\mu$ rather than of the pair $(x,\mu)$:
 \beqa\label{pdfSL}
 p(x')&=&\frac{1}{2\pi}\int_\bR e^{-ix'\xi-\psi^0_{st}(\xi)}d\xi,\\\label{pdfSL1/2}
 &=&\frac{1}{\pi}\Re\int_{\bR_+}e^{-ix'\xi-\psi^0_{st}(\xi)}d\xi,\
 \\\label{derpdfSL}
 p'(x')&=&\frac{1}{\pi}\Re\int_{\bR_+} (-i\xi)e^{-ix'\xi-\psi^0_{st}(\xi)}d\xi,\\\label{cpdfSL}
 F(x')&=&\frac{1}{2}+\mathrm{v.p.}\frac{1}{2\pi}\int_\bR\frac{e^{-ix'\xi-\psi^0_{st}(\xi)}}{-i\xi}d\xi.
 \eqa
 (Here v.p. denotes the Cauchy principal value.) Furthermore, some of the constructions and proofs in the paper are simpler
  in terms of $\Cp$, $|\Cp|$ and $\varphi_0=\mathrm{arg}\,\Cp=-\arctan(\be\tan(\al\pi/2))$.
 Parameterizations \eq{SLnu1} and \eq{SLnuneq1} are convenient in the case $\al\neq 1$, when  the rescaling
$\xi\to a\xi$, where $a>0$, is needed to change the ratio $x'/C_+$.
As we will explain in the main body of the text, the rescaling allows one to decrease the number of terms in the quadrature procedures
that we will construct. If $\al=1$, the rescaling is not so useful.

For wide regions in the parameter space, the integrands in \eq{pdfSL}-\eq{cpdfSL}
are highly oscillatory, and standard numerical quadratures face serious difficulties. 
Zolotarev \cite{Zolotarev} used the method of stationary phase to reduce the calculation of pdf to evaluation
of an integral of a positive function, which has exactly one point of maximum (see \cite{Nolan97} for a variation of this approach).
Unfortunately, in many cases, the spike of the integrand is very high,
and popular numerical algorithms can miss the spike and underestimate the integral. To avoid this problem,
one  needs to find the point of maximum numerically and compute the integral as a sum of two integrals. The procedure becomes
computationally expensive. In addition, the new integrand is given by a complicated expression, hence
even the computational cost of evaluation at one point is non-negligible.
See \cite{Zolotarev,SamorodnitskyTaqqu94,Nolan97,Nolan98,AmentONeil18} and the bibliographies therein for details.
Finally, this approach is very difficult to generalize for the case of mixtures of stable distributions, which arise in applications to signal processing  \cite{SignalProc10},
because there may be several points of local extremum.

In  a recent paper \cite{AmentONeil18},
the authors analyze difficulties of numerical realization of \eq{pdfSL}-\eq{cpdfSL} for several classes of popular quadrature methods and asymptotic expansions,
 and suggest a small collection of quadrature rules and asymptotic expansions which allow one
to evaluate the integrals \eq{pdfSL}-\eq{cpdfSL}  for large regions in the parameter space.
The class of quadrature schemes in \cite{AmentONeil18} is an extension of the classical Gaussian quadrature
schemes, which require the precalculation of nodes and weights with sufficiently high precision; the calculations are rather involved.
The authors  state that, in the asymmetric case, the set of quadrature rules in \cite{AmentONeil18} is efficient if $\al\in [0.5,0.9]\cup [1.1,2.0]$;
in the symmetric case, the asymptotic formulas are efficient for small $\al$, and the quadratures for $\al\in [0.5,2]$. The numerical experiments reported
in \cite{AmentONeil18} produce the pdf for $\al\in[0.5,0.9]\cup [1.1,2.0]$
 and the cpdf for $\al\in [1.1, 2]$ with absolute errors of order $10^{-14}$. For the cpdf in the
 range $\al\in [0.5, 0.9]$, absolute errors of order $10^{-8}$ are documented.

In the present paper, we introduce families of changes of variables which lead to integrals that can be calculated quickly using the simplified 
trapezoid rule. This approach does not rely on  very accurate evaluation of auxiliary quantities such as the point of maximum
in \cite{Zolotarev,Nolan97}, allows for fast oscillation of the integrand, and, unlike the stationary phase method,
is fairly flexible as far as the choice of an approximately optimal change of variables is concerned. In fact, a couple of the most efficient integral representations which 
we derive (see \eq{pdfSL2yLe1p2}  and \eq{cpdf_conic_nuLe14}) involve factors 
of the form $\sin(a\sin(b e^{\al y}))$, where $a,b>0$, $\al\in (0,1)$;
as $y\to +\infty$, the integrands oscillate widely. 
Methods developed in the present paper allow one to calculate $p$, $F$ and derivatives of $p$ w.r.t. $x$ and $\al, \sg, \be$
accurately and quickly for $\al$ close to 1 (e.g., $\al=0.998$) or 0 (e.g., $\al=0.1$), and/or large $|x'|$ using fairly simple schemes which require 1-2 hundred 
terms in the simplified trapezoid rule
 to satisfy error tolerance of order $10^{-15}$ (for cpdf, $10^{-12}$); in some regions, several dozen of terms suffice.
 In 90-95\% cases, the CPU time is in the range of 5-100 microseconds, and, with the exception of a very small region $\{\al\in (0,1/3), 0<\be x'<<1\}$,
 not more than several msec. suffice. Typically, the universal procedures designed in the paper allow one
 to choose the parameters of the schemes automatically, and double-check the accuracy of
 the results if desired.

\begin{figure}
  \includegraphics[width=1\textwidth]{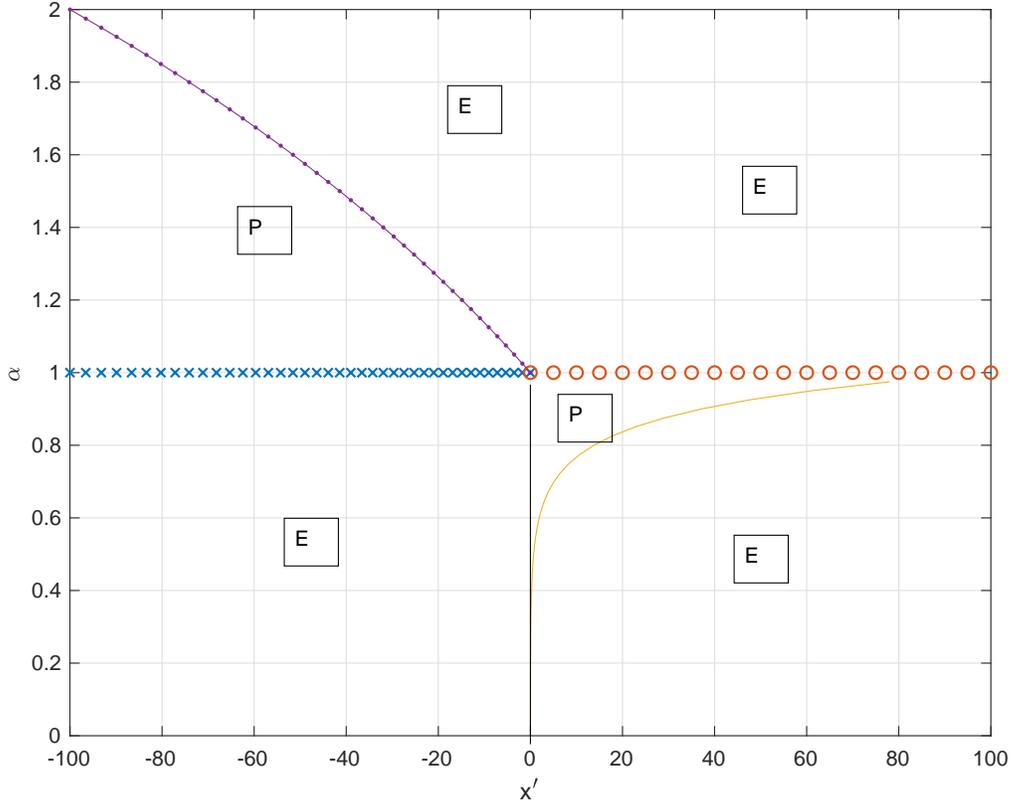}
\caption{Stylized regions, for $\be>0$. E: where the exponential acceleration and simplified conic
trapezoid rule are efficient. P: the polynomial acceleration is more efficient. 
Crosses:  where the sub-polynomial acceleration is more efficient.
E is the union of 1) the vertical solid segment
and the region to the left of it; 2) the region to right of the solid curve; 3) line of circles and 
the region above this line and dash-dotted line. }
\label{fig:1}       
\end{figure}

The main ingredient of the first method is a conformal change deformation of the contour of integration  in \eq{pdfSL}, the corresponding change of variables 
of the form
\begin{equation}\label{sinhbasic}
\xi=\chi_{\om_1, \om; b}(y)=i\om_1+b\sinh (i\om+y),
\end{equation}
where $i=\sqrt{-1}$, $\om_1, \om \in \bR, b>0$ ({\em sinh-acceleration}), and the simplified trapezoid rule in the $y$-coordinate.
The sinh-acceleration is possible if the characteristic exponent admits analytic continuation to a union of a strip containing the real line
or adjacent to the real line and a cone (see \cite{SINHregular} for applications of the sinh-acceleration to several problems
in finance).
 In the case of completely asymmetric L\'evy processes
(either $\cp=0$ or $\cm=0$), the characteristic exponent admits analytic continuation
to the complex plane with a cut along one of the imaginary half-axes, hence sinh-acceleration
is possible.
 The parameters of the change of variables and the grid for the simplified
  trapezoid rule needed to satisfy the desired error tolerance can be calculated quickly and easily; the summation in the simplified trapezoid
  rule admits a straightforward  vectorization.
  Furthermore, one can use two changes of variables to double-check the accuracy of the calculations: the probability that
  two sums for two different changes of variables agree up to, e.g., $10^{-13}$ purely by chance is negligible unless each sum
  has an error of the order of $10^{-13}$.
  In applications to evaluation of complicated integrals arising in computational finance,
  the efficiency of this approach as compared to popular quadratures is demonstrated in \cite{iFT,paraHeston,MarcoDiscBarr,one-sidedCDS,pitfalls,BarrStIR,UltraFast,HestonCalibMarcoMeRisk,SINHregular}.
  Note that in op.cit. (with exception of \cite{SINHregular}), a family of conformal changes of variables
  (called {\em fractional-parabolic})
  \begin{equation}\label{fractparabasic}
\xi=\chi^\pm_{\om;a; b}(\eta)= i\om\pm ib(1\mp i\eta)^a,
\end{equation}
where $\om\in\bR, a>1, b>0$, was used. The sinh-acceleration and 
fractional-parabolic change of variables are applicable iff the characteristic exponent admits analytic continuation
to the union of a strip and cone containing $\bR$ or adjacent to $\bR$.
If the parameters are chosen correctly, then the oscillating factor becomes fast decaying one, and the rate of the decay
of the integrand at the infinity increases exponentially in the case of the sinh-acceleration and polynomially in the case of the fractional-parabolic change
of variables. 
Stable L\'evy processes of index $\al\in (0,2)$ with $c_\pm>0$ do not admit analytic continuation
to a strip around or adjacent to the real axis,
hence, the sinh-acceleration can be applied only after an approximation of $\psi^0_{st}$ 
by functions that do enjoy this property.  In the first version \cite{NewFamilies} of the present paper, 
we use slight modifications of the characteristic exponents of
KoBoL processes \cite{KoBoL,NG-MBS}
\begin{eqnarray*}\label{kbl2}
\psi^0(\xi)&=&-c_-\Gamma(-\nu)(\lp+i\xi)^\nu
-c_+\Ga(-\nu)(-\lm-i\xi)^\nu,
\end{eqnarray*}
where $\nu\in (0,2), \nu\neq 1$, $c_\pm\ge 0$ and $c_++c_->0$ (if $\nu=1$,
the expression is different). 
We calculated the approximate values for several small
values of $\la=\la_+=-\la_-$ using sinh-acceleration, and
 applied
 Richardson's extrapolation. 
Our numerical experiments show
that the  method based on Richardson extrapolation
is less efficient than the  methods explained below.

We modify the sinh-acceleration, fractional parabolic acceleration, and the third type of the conformal changes
of variables
\bbe\label{subpolynomial}
\xi=\eta\ln^m(1+b\eta^2)
\ee 
where $m>1, b>0$ (this is similar to the {\em hyperbolic} change of variables introduced in \cite{iFT},
in addition to the fractional parabolic change of variables). Here, the acceleration 
of the rate of convergence is sub-polynomial but,  as we will explain in the paper, in some cases, the change of variables \eq{subpolynomial}
is preferable. Similarly, 
as examples in \cite{SINHregular} demonstrate, typically, 
\eq{sinhbasic} is more efficient than \eq{fractparabasic}, but this is not always the case.

We use the following one-sided counterparts of \eq{sinhbasic}, \eq{fractparabasic}, \eq{subpolynomial}. 
 \smallbreak
 \noindent
(E)  
We make the exponential change of the variable $\xi=e^{i\om+y}$ in the integral \eq{pdfSL1/2}
 \bbe\label{pdfSL2}
p(x')=\frac{1}{\pi}\Re\int_0^{+\infty} e^{i\om+y-ix'\xi(y)-\psi^0_{st}(\xi(y))}dy,
\ee
where the choice of $\om$ is determined by a cone where the integrand decays quickly,
 and apply the simplified trapezoid rule in the $y$-coordinate ({\em simplified conic trapezoid rule})
 \bbe\label{conic_simpl}
 p(x')=\frac{\ze}{\pi}\Re\left(e^{i\om}\sum_{j=-N_-}^{N_+} e^{j\ze-ix'e^{i\om+j\ze}-\psi^0_{st}(e^{i\om+j\ze})}\right).
 \ee
  In many cases, the  integrand in the $y$-coordinate decreases more slowly as $y\to-\infty$ than as $y\to +\infty$,
  hence, we have to choose $N_->> N_+$. To decrease $N_-$,
  we calculate several terms of
 the asymptotic expansion of the truncated part of the infinite sum in the neighborhood  of $y=-\infty$, and add the result to the
 sum in the simplified trapezoid rule.
 For the scheme to
 be efficient, we need to derive sufficiently simple and approximately optimal prescriptions for the choice of the cone of analyticity
 of the initial integral, and for the choices of $\om$ and $\ze, N$. These choices  rely on the analysis of the cone of analyticity
 of the initial integrand, and bounds for the discretization error (in terms of the Hardy norm) and truncation error.
 
  \smallbreak
 \noindent
(P) If $x'<0$, we make the change of variables
\bbe\label{pol1/2m}
\xi=i(1-(1+ib\eta^m)^a),
\ee
where $a\in (1,2)$, $m\ge 1$ and $b>0$; if $x'>0$, we use 
\bbe\label{pol1/2p}
\xi=i(-1+(1-ib\eta^m)^a).
\ee 
This counterpart of
\eq{fractparabasic} is of a more general form than \eq{fractparabasic}; the latter  can be generalized
in the similar vein. After that, we apply an additional change of variables
\bbe\label{yPchange}
\eta=0.5(y+(y^2+1)^{1/2}),
\ee
and apply the simplified trapezoid rule. The generalization \eq{pol1/2m}-\eq{pol1/2p} is crucial in this paper because 
if $m=1$, then $N_-$ in the simplified
trapezoid rule is too large. Contrary to (E), the terms of the asymptotic expansion are not given by simple explicit analytical expressions.
They are expressed in terms of Riemann zeta function.  

 \smallbreak
 \noindent
(SubP) We make the change of variables \eq{subpolynomial}, then the change of variables 
\bbe\label{yPchange2}
\eta=0.5(y+(y^2+D^2)^{1/2}),
\ee
choose an appropriate line of intregration $\{\Im y =\mp D/2\}$ (the choice of sign depends on the sign of the product
$x'\be$), and, finally, apply the simplified trapezoid rule. 
The terms of the asymptotic expansion are expressed in terms of Riemann zeta function and its derivatives.

\smallbreak
In the case of cpdf, it is useful or even necessary to make  simple preliminary transformations and after that apply the conformal changes of variables.

\smallbreak
Thus, we have 3 two-sided conformal acceleration methods, which can be applied to completely asymmetric L\'evy processes
(and many classes of L\'evy processes with exponentially decaying tails used in finance), and 3 one-sided versions, which
increase the rate of decay of the integrand exponentially (E), polynomially (P), and sub-polynomially (SubP). If the cone of analyticity, 
where the integrand decays fast at infinity, is not too narrow, then (E) is strictly better than (P), and (P) is better than (SubP).
However, if the cone is too narrow, then the mesh size in (E) is much smaller than in (P) and (SubP), and, for a given error tolerance,
the number of terms in (P) or (SubP) can be significantly smaller. See Fig.1 for a stylized description of the regions
where this or that method is advantageous.

\smallbreak

 The rest of the paper is organized as follows.
 In Sect.~ \ref{conicpdfnuneq1}-\ref{conicpdfnu1}, we apply the exponential change of variables and simplified trapezoid rule to pdf.
In Sect.~\ref{coniccpdfnuneq1}-\ref{coniccpdfnu1}, the same scheme is applied to the cpdf.
 In Sect.~\ref{der_and_quantiles}, we explain how to modify the method of the paper for calculation
of the derivatives of pdf and cpdf w.r.t.  $x'$ and parameters of the distribution.
In Sect.~\ref{algquantile}, we outline the procedure for evaluation of quantiles in the tails, and applications to the Monte Carlosimulations.
An application of   the sinh-acceleration in the case of completely asymmetric L\'evy processes
 is outlined in Sect.~\ref{one-sided}. Polynomial and sub-polynomial accelerations are explained in Sect. \ref{s:Polynom}
and \ref{s:SubP}.
Numerical examples are in Sect.~\ref{numer}.  Sect.~\ref{concl} summarizes the results of the paper.

 \section{Exponential acceleration. PDF, $\al\neq 1$}\label{conicpdfnuneq1}
 If $x'=0$, the pdf $p(x')$ can be calculated explicitly:
\bbe\label{pdfstable}
p(0)
=\frac{\Ga(1/\al+1)}{\pi}\Re C_+^{-1/\al}=\frac{\Ga(1/\al+1)}{\pi |\Cp|^{1/\al}}\cos\frac{\varphi_0}{\al}.
\ee
Hence, in this section, we assume that $x'\neq 0$.

 \subsection{Choice of the cone of analyticity and $\om$}\label{choice_cone}
 We need to choose an open cone $\cC_{\gam_0,\gap_0}=\{\mathrm{arg}\, \xi\in (\gam_0,\gap_0)\}$ around $\bR_+$ or adjacent to $\bR_+$  s.t.
$e^{-ix'\xi-\psi^0_{st}(\xi)}$, the integrand in \eq{pdfSL}, admits analytic continuation to $\cC_{\gam_0,\gap_0}$ and decays as $\xi\to \infty$ along any ray
in $e^{i\varphi}\bR_+\subset \cC_{\gam_0,\gap_0}$;
if  $\gam_0<-\pi$ or $\gap_0>\pi$, then
the cone $\cC_{\gam_0,\gap_0}$ is a subset of an appropriate Riemann surface.
Since $x'\neq 0$ and $\al\neq 1$, an equivalent condition for the decay at infinity is: for any $\varphi\in (\gam_0,\gap_0)$, as $\rho\to+\infty$,
\begin{enumerate}[(i)]
\item if $\al\in (1,2]$,
$\Re\psi^0_{st}(\rho e^{i\varphi})\to+\infty$;
\item
if $\al\in (0,1)$,
$\Re(ie^{i\varphi}x'\rho)\to +\infty.$
\end{enumerate}
As $\rho\to+\infty$,
\bbe\label{asimpsi0nuneq1}
\psi^0_{st}(\rho e^{i\varphi})\sim  |C_+| e^{i(\varphi_0+\varphi\al)}\rho^\al.
\ee
Hence, (i) holds with the choice $\ga^\pm_0=(\pm\pi/2-\varphi_0)/\al$.
If $x'<0$ (resp., $x'>0$), (ii) holds with $\gam_0=0, \gap_0=\pi$ (resp., $\gam_0=-\pi, \gap_0=0$).
 However,
simplified recommendations   that ensure that $\Re\psi^0_{st}(\rho e^{i\varphi})\to+\infty$ if $\al\in (1,2)$, 
and $\Re(ie^{i\varphi}x'\rho)\to +\infty$ if $\al\in (0,1)$,
for $\varphi\in (\gam_0,\gap_0)$, but not that  both $\Re\psi^0_{st}(\rho e^{i\varphi})\to+\infty$ and $\Re(ie^{i\varphi}x'\rho)\to +\infty$ for $\varphi\in (\gam_0,\gap_0)$,
can imply very large Hardy norms (hence, very small mesh sizes), and very large truncation
parameters. The number of terms $N$ in the resulting simplified trapezoid rule 
becomes very large. If  $\Cp$ and $x'$ are of approximately the same order of magnitude, the following choice
is approximately optimal in the sense that no other choice can significantly decrease $N$ needed to satisfy
a given error tolerance:
\begin{enumerate}[a.]
\item
if $\al\in [1.05,2]$ or $x'=0$, then $\ga^\pm_0=(\pm\pi/2-\varphi_0)/\al$;
\item
if $\al\in (0,0.95]$, then  $\gam_0=0, \gap_0=\pi$ if $x'<0$, and $\gam_0=-\pi, \gap_0=0$ if $x'>0$.
\end{enumerate}
If $\Cp$ and $x'$ are not of the same order of magnitude but
$\al$ is not too close to 1, we can use the rescaling $\xi\mapsto a\xi$ in
the initial formula for the stable density (with $a>>1$ and $a<<1$ in the first case and the second case, respectively),
and reduce to the case of $\Cp$ and $x'$ of the same order of magnitude.

If $\al$ is close to 1, the rescaling is inefficient, and
it is simpler to use $\gam_0, \gap_0$ such that
both (i) and (ii) hold.
Explicitly,
\begin{enumerate}[c.]
\item
if $x'>0$, $\gam_0=-\min\{\pi, (\pi/2+\varphi_0)/\al\}$, $\gap_0=0$;
\end{enumerate}
\begin{enumerate}[d.]
\item
if $x'<0$, $\gam_0=0$, $\gap_0=\min\{\pi, (\pi/2-\varphi_0)/\al\}$.
\end{enumerate}
 Prescriptions c.- d. can be used for all $\al\neq 1$, and, in the majority of cases,
do not lead to  significant increases of the number of terms in the simplified trapezoid rule.
The ``exceptionally bad" regions, where simple prescriptions c.-d. lead to unnecessary narrow
strips of analyticity in the $y$-coordinate, hence,  unnecessarily large numbers of terms in
the simplified trapezoid rule, are the ones where $x'$ and $\varphi_0$ are of the opposite sign,
$|\varphi_0|$ is close to $\pi/2$, and the ratio $\Cp/x'$ is very small (resp., large)
if $\al>1$ (resp., $\al<1$). Equivalently, the number of terms is very large if
either 
\sbr
\noindent
(1) $\al>1$ is close to 1, $\beta$ is close to $-\mathrm{sign}\,x'$, and $\Cp/x'$ is very small, or 
\sbr
\noindent
(2) $\al<1$ is close to
1, $\beta$ is close to $\mathrm{sign}\,x'$, and $\Cp/|x'|$ is very large.
\sbr
In these cases, the following simple recommendation allows one to reduce the number of terms in the simplified trapezoid rule:
\sbr
\noindent
$e$. if $\al>1$ is close to 1, find the maximal subinterval $(\gam_0,\gap_0)\subset (-\pi/2-\varphi_0,\pi/2-\varphi_0)$ 
s.t. $\forall \ga\in (\gam_0,\gap_0)$
\bbe\label{condgauniv0}
  \Re(ix'e^{i\ga}+\Cp e^{i\al\ga})>0,
  \ee
  equivalently,
  \bbe\label{condgauniv}
  -x'\sin\ga+|\Cp|\cos(\varphi_0+\al\ga)>0;
  \ee
\sbr
\noindent
$f_+$. if $\al<1$ is close to 1, $x'>0$,  and $\be>0$, set $\gap_0=0$, and find the minimal $\gam_0$ s.t. \eq{condgauniv}
holds;
\sbr
\noindent
$f_-$.
if $\al<1$ is close to 1, $x'<0$,  and $\be<0$, set $\gam_0=0$, and find the maximal $\gap_0$ s.t. \eq{condgauniv}
holds.

\begin{rem}\label{complicated_choice}{\rm One can derive more efficient albeit more involved
recommendations analyzing the dependence of the Hardy norm and truncation parameter below
on the choice of $\ga^\pm_0$.}
\end{rem}
 
\subsection{Error bound of the infinite trapezoid rule and choice of $\ze$}\label{choice_ze}
We  set
$\om=(\gam_0+\gap_0)/2$, $d_0=(\gap_0-\gam_0)/2$, $d=k_dd_0$, where $k_d<1$ (e.g., in the range $[0.8,0.95]$).
With this choice, the integrand in \eq{pdfSL2}, denote it $f(y)$, 
admits analytic continuation
to the strip $S_{(-d,d)}=\{y\in \bC\ |\ \Im y\in (-d,d)\}$ around the real line and decays sufficiently fast as $y\to\infty$ remaining
in the strip so that
$
\lim_{A\to \pm\infty}\int_{-d}^d |f(ia+A)|da=0,$
and the Hardy norm
\[
H(f,d)=\lim_{a\downarrow -d}\int_\bR|f(ia+ y)|dy+\lim_{a\uparrow d}\int_\bR|f(ia+y)|dy
\]
is finite.
Fix $\ze>0$ and
construct the grid $\{y_j=j\ze, j\in \bZ\}$. The  error
of the infinite trapezoid rule
\begin{equation}\label{pdfLevytrap0}
p_{st}(x')=\ze\sum_{j\in \bZ} f(y_j)
\end{equation}
 admits an upper bound via $H(f,d)\exp[-2\pi d/\ze]/(1-\exp[-2\pi d/\ze])$  (see Theorem 3.2.1 in \cite{stenger-book} and Appendix in \cite{paraHeston} for a simple proof).
 In some cases, the Hardy norm of the integrand as a function on a maximal strip of analyticity
is infinite. In such cases, in order that the universal bound for the discretization error be applicable, one has to apply the bound
to functions on
a narrower strip of analyticity; this explains our choice  $d<d_0$. We use
an approximate upper bound $H(f,d)\le H_++H_-$, where
\bbe\label{boundHpm0}
H_\pm=\frac{1}{\pi }\int_0^{+\infty}e^{x'\sin(\om\pm d)\rho-c_\infty(\om\pm d)\rho^\al}d\rho,
\ee
and $c_\infty(\varphi)=|C_+|\cos(\varphi_0+\varphi\al)$. To derive approximate bounds for $H_\pm$,
one can use any simple quadrature or design simple general prescriptions. 
We consider separately cases $\al\in (1,2]$, $\al\in (0,1)$.
\sbr
\noindent
(1) If $\al\in (1,2]$, then $c_\infty(\om\pm d)>0$. We change the variable $c_\infty(\om\pm d)\rho^\al=u$:
\bbe\label{boundHpm012}
H_\pm=\frac{1}{\pi \al c_\infty(\om\pm d)^{1/\al}}\int_0^{+\infty}u^{1/\al-1}e^{B_\pm u^{1/\al}-u}du,
\ee
where $B_\pm=x'\sin(\om\pm d)c_\infty(\om\pm d)^{-1/\al}$. If $B_\pm\le 0$, the integral on the RHS of \eq{boundHpm012}
is bounded by $\Ga(1/\al)<1$; if $B_\pm\le 1/2$, then by $2^{1/\al}\Ga(1/\al)+\al e^{1/2}$.
\sbr
\noindent
(2) If $\al\in (0,1)$, then $-x'\sin(\om\pm d)>0$. We write
\bbe\label{boundHpm001}
H_\pm=\frac{1}{\pi(-x'\sin(\om\pm d)) }\int_0^{+\infty}e^{-u+B_\pm u^\al}d\rho,
\ee
where $B_\pm=-c_\infty(\om\pm d)(-x'\sin(\om\pm d))^{-\al}$. If $B_\pm\le 0$, the integral on the RHS
of \eq{boundHpm001} is bounded by 1; if $B_\pm\le 1/2$, then by 2.

\sbr
\noindent
(3) In both cases (1) and (2), to obtain a bound for $H_\pm$ when $B_\pm$ in the region $(1/2,+\infty)$, one can store the values of the integrals for several values of $B_\pm$ and use
a simple interpolation procedure. It is evident that the integral can be very large if $\al$ is close to 1 or $B_\pm$ is very large, this is why, for $\al$ close to 1,
we
recommend to choose $\ga^\pm_0$ so that $B_\pm\le 0$, and, in the cases of large positive $B_\pm$, make the preliminary rescaling so that
the new $B_\pm$ are not too large.

\sbr
\noindent
(4) In Cases e.-f., the choice of 
$\ga^\pm_0$ is made so that \eq{condgauniv0}, equivalently, \eq{condgauniv}
holds, and the simplified bound is via
\beqa\label{boundHpm00ef}
H_\pm&=&\frac{1}{\pi}\int_0^{+\infty}e^{-(-x'\sin(\om\pm d)+c_\infty(\om\pm d))\rho}d\rho
=1/(\pi(-x'\sin(\om\pm d)+c_\infty(\om\pm d))).
\eqa
 We set $\ze=2\pi d/\ln(10(H_++H_-)/\eps)$, where $\eps>0$ is the error tolerance for the
discretization error.

\subsection{Choice of $N_+$}\label{choice_N_+}
The infinite sum $\ze\sum_{j>N_+}f(j\ze)$ can be approximated by the integral
$Err_{tr}=\int_\La^{+\infty} |f(y)|dy.$
If $\La$ is large, then we use  an approximation
\bbe\label{La1tJ000}
Err_{tr}= \frac{1}{\pi}\int_{\La_1}^{+\infty}e^{x'\sin(\om)\rho-c_\infty(\om)\rho^\al}d\rho,
\ee
where $\La_1=e^\La$, and $\La=N\ze$. To derive approximate bounds for $\La_1$
(we do not need accurate ones; relative errors of the order of 1000\% are admissible
and can be taken into account introducing an additional factor 10), one needs to consider separately several cases.
\sbr
\noindent
(1) If $A:=-x'\sin\om>0$, and $B:=c_\infty(\om)>0$, we find $\La_1$ from
the equation
\[
e^{-A\La_1-B\La^\al_1}\le \eps_1:=-\eps\pi x'\sin\om,
\]
where $\eps$ is the error tolerance for the truncation error, equivalently, as the positive solution of the equation
\[
F(u)=Au+Bu^\al-C=0,
\]
where $C=\max\{1,\ln(1/\eps_1)\}>0$. The
 solution can be easily  found using Newton's method. 
 Since there is no need to know $\La_1$ with high precision, we define  $\La_1=\min\{C/A, (C/B)^{1/\al}\}$.
\sbr
\noindent
(2)
If $B=c_\infty(\om)<0$, then $\al\in (0,1)$ and $-x'\sin\om>0$.  Assuming that $\al$ is not very close to 1,
we can find an approximation to $\La_1$ as follows. First, we find $\La_{11}$ such that, for $\rho\ge \La_{11}$,
\[
x'\sin(\om)\rho-c_\infty(\om)\rho^\al\le x'\sin(\om)\rho/2,
\]
next, find $\La_{12}=\frac{2\ln(1/\eps_1)/}{-x'\sin\om}$ from the condition
\[
\int_{\La_{12}}^{+\infty}e^{x'\sin(\om)\rho/2}d\rho\le \pi\eps,\]
and then set $\La_1=\max\{\La_{11},\La_{12}\}$.

\sbr
\noindent
(3) If $B=c_\infty(\om)>0$ but $-x'\sin\om<0$, then $\al\in (1,2]$. In \eq{La1tJ000}, we
change
the variable $c_\infty(\om)\rho^\al=u$:
\[
Err_{tr}=\frac{1}{\pi \nu c_\infty(\om\pm d)^{1/\al}}\int_{\La_2}^{+\infty}u^{1/\nu-1}e^{B_\pm u^{1/\al}-u}du,
\]
where $B_\pm=x'\sin(\om\pm d)c_\infty(\om\pm d)^{-1/\al}$ and $\La_2= c_\infty(\om)\La_1^\al$. 
Assuming that $\al$ is not very close to 1,
we can find an approximation to $\La_2$ as follows. First, we find $\La_{21}$ such that, for $u\ge \La_{21}$,
$
B_\pm u^{1/\al}-u\le -u/2$, next, find $\La_{22}$ from the condition
\[
\int_{\La_{22}}^{+\infty}u^{1/\al-1}e^{-u/2}du=\eps_2:=\eps\pi\al c_\infty(\om\pm d)^{1/\al}.
\]
We can use $\La_{22}=2\ln(1/\eps_2)$ as an upper bound for the solution; a more accurate bound
can be obtained solving the equation
\[
u/2+(1-1/\al)\ln u-\ln(2/\eps_2)=0.
\]
Then we
set $\La_2=\max\{\La_{21},\La_{22}\}$, $\La_1=(\La_2/c_\infty(\om))^{1/\al}$.

\sbr
\noindent
(4)
In Cases e.-f. of Sect. \ref{choice_cone}, the choice of
$\ga^\pm_0$ is made so that \eq{condgauniv0}, equivalently, \eq{condgauniv}
holds, and the simplified equation for $\La_1$ is
\bbe\label{La1ef}
\int_{\La_1}^{+\infty}e^{-(-x'\sin(\om\pm d)+c_\infty(\om\pm d))\rho}d\rho=\pi\eps.
\ee
We find $\La_1$, and set $\La=\ln(\La_1)$, $N_+=\mathrm{ceil}(\La/\ze)$.

\subsection{Choice of $N_-$}\label{choice_N_-}
A reasonably accurate approximation to $N_-$ can be found as follows:
 $\La_-=-\ln(\eps\pi)$, $N_-=\mathrm{ceil}(\La_-/\ze)$. We see that if $\al\in (1,2]$ or $\al\in (0,1)$ and $|x'|$ is large, then
for a small error tolerance, e.g., $10^{-15}$, $N_+$ can be several dozen and smaller 
whereas $N_-$ is 1-2 hundred. To decrease $\La_-$, hence, $N_-$, we use
\begin{eqnarray}\label{difp2}
p(x')&=&p(0)+\frac{1}{\pi}\Re\left(e^{i\om}\int_\bR
(e^{-ix'e^{i\om+y}}-1)e^{y-\Cp e^{\al(i\om+y)}}dy\right).
\end{eqnarray}
 The term $p(0)$ is calculated explicitly \eq{pdfstable}, and the second term on the RHS
 is calculated applying the simplified conic trapezoid rule
  to the integral on the RHS:
 \bbe\label{difpimp}
p(x')=p(0)+\frac{\ze }{\pi}\Re \left(e^{i\om}\sum_{j=-N_-}^{N_+}f(y_j)\right),
 \ee
 where $N_-=0.5\mathrm{ceil}\, (\ln(|x'|/(\eps\pi))/\ze)$, and
 \[
 f(y)=e^y[e^{-ix'e^{i\om+y}-C_+e^{\al(i\om+y)}}-e^{-\Cp e^{\al(i\om+y)}}].
 \] 
The key parameters $\ga^\pm_0$ should be chosen so that both $\Re(ix' e^{i\varphi}+\Cp e^{i\al\varphi})$ and $\Re(\Cp e^{i\al\varphi})$ are positive for any
 $\varphi\in (\gam_0,\gap_0)\}$ (the choice c.-d. in Sect.~\ref{choice_cone}).
 Then we may use the same approximate recommendations for $\ze$ and $N_+$ as above,
 but decrease $N_-$. 
 We start with \eq{difp2} and  note that 
 \bbe\label{fpdfnuneq1}
 f(y)=-ix'e^{i\om}e^{2y}+C(y)R_2(y), 
 \ee
 where $C(y)\to 1$ as $y\to -\infty$, and $R_2(y)$ admits the upper bound
 \bbe\label{boundR2pdfnuneq1}
 |R_2(y)|\le (x')^2 e^{3y}/2+|x'||\Cp|e^{(2+\al)y}.
 \ee
 The representation \eq{fpdfnuneq1} and similar expansions in the next sections allows one to
 add correction terms to the  simplified conic trapezoid rule. These terms are of the form $a_{rs}S_{rs}(\ze,N_-)$, where $a_{rs}\in\bC$ are
independent of $\ze$ and $N_-$ (some of $a_{rs}$ depend on $x'$), and
\bbe\label{Srs}
S_{rs}(\ze,N_-):=\sum_{j=-\infty}^{-N_--1}e^{j(r+s\al)\ze}=\frac{e^{-(r+s\al)(N_-+1)\ze}}{1-e^{-(r+s\al)\ze}}.
\ee
 If \eq{difp2} is used,
 \beqa\label{pdfSL2y3}
 p(x')&=&\frac{\Ga(1/\al+1)}{\pi |C_+|^{1/\al}}\cos\frac{\varphi_0}{\al} +\frac{\ze }{\pi}\Re \left(e^{i\om}\sum_{j=-N_-}^{N_+}f(j\ze)
-ix'e^{2i\om}S_{20}(\ze, N_-)\right).
 \eqa
 
  \subsection{The case of $\al< 1$ and large $|x'|$}\label{conicpdfnuLeq1}
 In this case, 
  it is advantageous to transform \eq{pdfSL2} as follows. If $x'>0$, then $\om\in (-\pi,0)$, and if $x'<0$, then $\om\in (0,\pi)$.
  In both cases,
  \[
  \frac{e^{i\om}}{\pi}\int_{-\infty}^{+\infty} e^{y-ix'e^{i\om+y}}dy=\frac{e^{i\om}}{\pi}\int_0^{+\infty}e^{-ix'e^{i\om}\rho}d\rho=\frac{1}{\pi ix'}.
  \]
  Since $\Re(1/(\pi ix'))=0$, we may rewrite \eq{pdfSL2} as follows
  \begin{eqnarray}\label{pdfSL2yLe1}
 p(x')=\frac{1}{\pi}\Re \left(e^{i\om}\int_\bR f(y)dy\right),
\end{eqnarray}
where $f(y)=e^{y-ix'e^{i\om+y}-\Cp e^{\al(i\om+y)}}- e^{y-ix'e^{i\om+y}}$. 
We use the recommendations in Subsection \ref{choice_cone}
to choose $\ga^\pm_0, \om, d_0$. 
  The choice of $N_+$ is modified in the evident manner, and $N_-$ depends on the order
  of the asymptotic expansion.  We use
the Taylor expansion of the exponent of order 1 or 2.\footnote{The reader can easily derive
expansions of higher order; in our numerical experiments, these expansions did not bring
sizable advantages, and, naturally, if $|x'|$ is very large, even the expansion of order 3 should be avoided.}
 \sbr\noindent
{\em Ord 1.}  We define $\La_{11}=\ln(2|x'\Cp|/(\eps\pi))/(2+\al)$, 
$\La_{12}=\ln(|\Cp|^2/(\eps\pi))/(1+2\al)$, $\La_1=\max\{\La_{11},\La_{12}\}$, $N_-=\mathrm{ceil}(\La_1/\ze)$,
calculate $S_{11}(\ze,N_-)$,
and then 
 \beqast
 p(x')=\frac{\ze}{\pi}\Re \left(e^{i\om}\sum_{j=-N_-}^{N_+}
 f(j\ze)-\Cp e^{i\om(1+\al)}S_{11}(\ze,N_-)\right).
 \eqast
 
\sbr\noindent
{\em Ord 2.}  We define $\La_{21}=\ln(3(x')^2|\Cp|/(2\eps\pi))/(3+\al)$, 
$\La_{22}=\ln(3|x'\Cp^2|/(2\eps\pi))/(2+2\al)$, \\
$\La_{23}=\ln(|\Cp^3|/(6\eps\pi))/(1+3\al)$, 
$\La_1=\max_j\La_{2j}$, $N_-=\mathrm{ceil}(\La_1/\ze)$,
calculate $S_{11}(\ze,N_-)$, $S_{21}(\ze,N_-)$  and $S_{12}(\ze,N_-)$,
and then 
 \beqast
 p(x')&=&\frac{\ze}{\pi}\Re e^{i\om}\left(\sum_{j=-N_-}^{N_+}
 f(j\ze)-\Cp e^{i\om(1+\al)}S_{11}(\ze,N_-)
 \right.\\\nonumber
 &&+ \left. ix'\Cp e^{i(2+\al)\om} S_{21}(\ze,N_-)
\frac{\Cp^2}{2}e^{i\om(1+2\al)}S_{12}(\ze,N_-)\right).
 \eqast

\subsection{The case $\al< 1$. Further simplifications}\label{NuLe1further}
If $|x'|+|\Cp|\cos(\varphi_0-(\mathrm{sign}\,x' )\al\pi/2)$ is positive and not very small, then \eq{pdfSL2yLe1} can be
  simplified using\\ $\om=-\mathrm{sign}\, x' \pi/2$;
  the number of elementary operations needed to calculate the individual terms in the simplified trapezoid rule
  decreases. Since there is no  sizable loss in the width of the strip of analyticity, the number of terms
  remains approximately the same, and the total CPU time decreases.
 \sbr 
If $x'<0$, then $\om=\pi/2$, and $d_0\le\pi/2$ needed to choose $\ze$ is found from the condition
\[
-x'\sin\ga+|\Cp|\cos(\varphi_0+\al\ga)>0, \ga\in (\frac{\pi}{2}-d_0,\frac{\pi}{2}+d_0).
\]
The formula for the pdf becomes
\beqa\label{pdfSL2yLe1m2}
p(x')
 &=&-\frac{1}{\pi}\Im \int_\bR\left( e^{y+x'e^{y}-|\Cp| e^{i(\varphi_0+\frac{\al\pi}{2}}e^{\al y}}
- e^{y+x'e^{y}}\right)dy\\\nonumber
 &=&\frac{1}{\pi}\int_\bR
 e^{y+x'e^{y}-|\Cp| \cos(\varphi_0+\frac{\al\pi}{2})e^{\al y}}
 \sin\left(|\Cp| \sin(\varphi_0+\frac{\al\pi}{2})e^{\al y}\right)dy.
 \eqa
If $x'>0$, then $\om=-\pi/2$, and $d_0\le\pi/2$ needed to choose $\ze$ is found
from the condition
\[
-x'\sin\ga+|\Cp|\cos(\varphi_0-\al\ga)>0,  \quad \forall\ 
\ga\in (-\frac{\pi}{2}-d_0,-\frac{\pi}{2}+d_0).
\]
The formula for the pdf becomes
\beqa\label{pdfSL2yLe1p2}
 p(x)
&=&-\frac{1}{\pi}\Im \int_\bR\left( e^{y-x'e^{y}-|\Cp| e^{i(\varphi_0-\frac{\al\pi}{2})}e^{\al y}}
- e^{y-x'e^{y}}\right)dy\\\nonumber
 &=&\frac{1}{\pi}\int_\bR
 e^{y-x'e^{y}-|\Cp| \cos(\varphi_0-\frac{\al\pi}{2})e^{\al y}}
\sin\left(|\Cp| \sin(\varphi_0-\frac{\al\pi}{2})e^{\al y}\right)dy.
 \eqa
We leave to the reader the application of the simplified trapezoid rule and
straightforward calculations of the real parts of the coefficients 
in the improved formula for the pdf. 

\begin{rem}{\rm The integral on the RHS of \eq{pdfSL2yLe1m2} and \eq{pdfSL2yLe1p2} are highly
oscillatory but if $-x'$ is relatively large w.r.t. $|\Cp|\cos(\varphi_0+\al\pi/2)$ (resp., $x'$ is relatively large
w.r.t. $|\Cp|\cos(\varphi_0-\al\pi/2)$), then the integrands in the formulas for the Hardy norm and
the truncation error decay very fast at infinity, hence, a small number of terms in the simplified 
trapezoid rule suffices to satisfy a small error tolerance. If $|x'|$ is insufficiently large,
then the formulas in this subsection are inefficient.
}
\end{rem}

\section{Exponential acceleration. PDF, $\al= 1$}\label{conicpdfnu1}
In the symmetric case $\cp=\cm$,
 the pdf of the stable L\'evy process of index 1 can be easily calculated, hence, we consider
the asymmetric case $\cp\neq \cm$.
Since we cannot calculate $p(0)$ in the closed form, we replace the straightforward analog of \eq{difp2} with
 \beqast
p(x')&=&\Re \frac{1}{\pi C_0}
+\frac{1}{\pi}\Re\int_0^{+\infty}\left(e^{-ix'\xi-\psi^0(\xi)}-e^{-C_0\xi}\right)d\xi,
\eqast
where $C_0$ is in the open right half-plane, and apply the simplified conic trapezoid rule to the integral on the RHS.
 To make the impact of the additional term under the integral sign on the choice of the parameters of the simplified conic trapezoid rule as
small as possible, we choose  $C_0$ in the form $c_0e^{-i\om}$, where $c_0>0$ is moderately large.
The result is the formula
\bbe\label{pdfSL2ynu10}
 p(x')=\frac{\cos\om}{c_0\pi}+\frac{\ze }{\pi}\Re \left(e^{i\om}\sum_{j=-N_-}^{N_+}f(j\ze)\right),
 \ee
 where  \[
 f(y) =
 \exp[e^{i\om+2y}(-i(x'+\frac{2\sg\be}{\pi}y)-\sg(1-\frac{2\sg\be}{\pi})]
-\exp(y-c_0 e^y). \] 
Let $\varphi\in (-\pi/2,\pi/2)$, and $\xi=\rho e^{i\varphi}$. As $\rho\to+\infty$,
\[
\psi^0_{st}(\xi)=\rho \sg e^{i\varphi}\left[1+i(2\be/\pi)\ln\rho-(2\be/\pi)\varphi\right]+o(\rho).
\]
The leading term of asymptotics is $i e^{i\varphi}(2\be/\pi)\rho\ln\rho$, and we need to
deform the contour to the region where
\[
c_\infty(\varphi):=\Re (i e^{i\varphi})\frac{2\sg\be}{\pi}=-\frac{2\sg\be}{\pi}\sin\varphi>0.
\]
Hence, if $\be>0$ (resp., $\be<0$),  we deform the contour downward, and use $\gap_0=0$ (resp., upward, and use $\gam_0=0$).
Formally, in the first case, we may use $\gam_0=-\pi$, and in the second case, $\gap_0=\pi$. However, this is a reasonable choice only
if either $x'\be\ge 0$, or $x'\be<0$ but $-x'/(\sg\be)$ is not large. If $x'$ and $\be$ are of the opposite sign,
and $ |x'|/(\sg\be)$ is  large, we have
\[
e^{-ix'\rho e^{i\varphi}-\psi^0_{st}(\rho e^{i\varphi})}\sim  e^{-(ix'+\sg)e^{i\varphi}\rho}
\]
in the region of large $\rho$ and $x'$ s.t. $|x'|>> \sg|\be|2/\pi \ln \rho$. Hence, in this case, we need to choose $\gam_0<\gap_0$ so that,
for all $\varphi\in (\gam_0, \gap_0)$,
$
\Re (ix'+\sg)e^{i\varphi}>0$, equivalently,  $-x'\sin\varphi+\sg\cos\varphi>0$.
 In order to take these subtleties into account, we give the
following recommendations (in some cases, they are inefficient, but fairly safe).
\subsection
{Recommendation for the choice of $\gam_0,\gap_0, d, \om$}\label{choice_cone_1}
We consider the following 4 cases.
\sbr\noindent
$(+,+)$ 
if $\be>0$ and $x'\ge -\frac{\sg\be}{2}$, then $\gam_0=-\pi, \gap_0=0$;
\sbr
\noindent
$(-,-) $
if $\be<0$ and $x'\le -\frac{\sg\be}{2}$, then $\gam_0=0, \gap_0=\pi$;
\sbr
\noindent
$(+,-) $
if $\be>0$ and $x'\le -\frac{\sg\be}{2}$, then $\gam_0=\arctan(\frac{\sg}{x'}), \gap_0=0$;
\sbr
\noindent
$(-,+)$ 
if $\be<0$ and $x'\ge -\frac{\sg\be}{2}$, then $\gam_0=0, \gap_0=\arctan(\frac{\sg}{x'})$.

\sbr
\noindent
Choices of $\om$ and $d$ are determined by $\gam_0, \gap_0$ as in the case $\al\neq 1$.

\subsection{Approximate bounds for the Hardy norm and truncation error, and choice of $\ze$ and $N_+$}\label{choice_ze_1}
For $\varphi\in (\gam_0,\gap_0)$, we use the following simple (and fairly accurate if $\rho$ is large) bound
\[
\left|e^{-ix'\rho e^{i\varphi}-\psi^0_{st}(\rho e^{i\varphi})}\right|\le e^{((x'+2\be\sg/\pi)\sin \varphi-\sg(1-2\be\om/\pi)\cos\varphi)\rho}
\]
to derive an approximation
for the Hardy norm $H\le H_++H_-$, where
\beqast
H_\pm&=&\frac{1}{\pi}\left(\sg(1-\frac{2\be\om}{\pi})\cos(\om\pm d)-(x'+\frac{2\be\om}{\pi})\sin (\om\pm d)\right)^{-1}.
\eqast
We set $\ze=2\pi d/\ln(10 H/\eps)$.
Using the bound
\beqast
&&
\frac{1}{\pi}\int_{\La_1}^{+\infty}\left|e^{-ix'\rho e^{i\om}-\psi^0(1,\sg,\be,\rho e^{i\om})}\right|d\rho
\le \frac{ e^{-\La_1(\sg(1-2\be\om/\pi)\cos\om-(x'+2\be\sg/\pi)\sin \om)}}{\pi[\sg(1-2\be\om/\pi)\cos\om-(x'+2\be\sg/\pi)\sin \om]},
\eqast
we derive an approximation to the truncation parameter $\La$. First, we define
\begin{eqnarray}\label{La1nu1h}
B&=&\sg(1-2\be\om/\pi)\cos\om-(x'+2\be\sg/\pi)\sin \om, \\\nonumber
 \eps_1&=&\eps\pi B,\
\La_1= \ln (1/\eps_1)/B,
\end{eqnarray}
and then set $
\La_{1,+}=\max\{\La_1,-\frac{\ln(\eps\pi c_0\cos d)}{c_0\cos d}\}, \La_+=\ln \La_{1,+},\ N_+=\mathrm{ceil}\,(\La_+/\ze).
$

\subsection{Choice of $N_-$}\label{choice_N_-}
We can use  $N_-=\mathrm{ceil}\,(\La_-/\ze)$, where
$\La_-=\ln(1/\eps)$, which requires an unnecessary large $N_-$. To decrease $N_-$, we use the asymptotic expansion
\beqa\label{fpdfnu1} 
f(y)&=&e^y\left(e^{-ix'e^{i\om +y}-\sg e^{i\om +y}(1+i(2\be/\pi)(y+i\om))}-e^ {-c_0e^{y}}\right)\\\nonumber
 &=&-((ix'+\sg(1-2\be\om/\pi))e^{i\om}-c_0)e^{2y}+(2i \sg \be/\pi)e^{i\om}e^{2y}(-y) +C(y)e^{3y}R_2(y),
 \eqa
 where $C(y)\to 1$ as $y\to-\infty$, and $R_2(y)$ admits the upper bound
 \beqa\nonumber
 |R_2(y)|&\le& \frac{|x'|^2+\sg^2|1-2\be\om/\pi|^2+c_0^2}{2}+\frac{2\sg^2\be^2}{\pi^2}|y|^2
\\\label{boundR2pdfnu1}&&+\frac{(|x'|^2+\sg^2|1-2\be\om/\pi|^2)^{1/2}2 \sg |\be|}{\pi}|y| \eqa
 We write the RHS of \eq{boundR2pdfnu1} as $a_0+a_1|y|+a_2y^2$, define $\La_{-,1}=\ln(3a_0/(\eps\pi))/3$,
 find $\La_{-,j}$ as
 an approximate solution to $e^{-3\La}\La^j=\eps\pi/(3a_j)$, $j=1,2$, set $\La_-=\max\La_{-,j}$, $N_-=\mathrm{ceil}\,(N_-/\ze)$.
 Then we calculate $S_{20}(\ze,N_-)$ (see
  \eq{Srs}), and $S^1_2(\ze,N_-)$, where $S^1_n(\ze,N_-)$ is given by
 \[
 S^1_n(\ze,N_-)=\sum_{j=-\infty}^{-N_--1}e^{nj\ze}(-j).
 \]
 To derive a formula for $S_1(\ze,N_-)$, we
differentiate the equality
 $
 \sum_{j=N_-+1}^{+\infty} q^j=\frac{q^{N_-+1}}{1-q}, |q|<1,
 $
w.r.t. $q$. The result is
\beqast
 \sum_{j=N_-+1}^{+\infty}j q^{j-1}&=&\frac{(N_-+1)q^{N_-}(1-q)+q^{N_-+1}}{(1-q)^2}
 =\frac{q^{N_-}((N_-+1)-N_-q)}{(1-q)^2}.
 \eqast
 Substituting $q=e^{-n\ze}$, we obtain
 \bbe\label{S1}
S^1_n(\ze,N_-)=\frac{e^{-n(N_-+1)\ze}((N_-+1)-N_-e^{-n\ze})}{(1-e^{-n\ze})^2}.
\ee
Finally, we obtain the formula similar to \eq{pdfSL2ynu10},
with two additional terms:
\beqa\label{pdfSL2ynu1}
 p(x')&=&\frac{\cos\om}{c_0\pi}
+\frac{\ze }{\pi}\Re \left(e^{i\om}\sum_{j=-N_-}^{N_+}f(j\ze)-A_0S_{20}(\ze,N_-) +A_1S^1_2(\ze, N_-)\right),
\eqa
 where $A_0=(ix'+\sg(1-2\be\om/\pi))e^{2i\om}-c_0e^{i\om},  A_1=2i e^{2i\om}\sg \be\ze/\pi$.
 
 \section{Exponential acceleration. CPDF, $\al\neq 1$}\label{coniccpdfnuneq1}
\subsection{Main formulas}\label{main_cpdf_nuneq1}
Taking into account that $
\mathrm{v.p.}\int_\bR e^{-|\xi|}/(-i\xi)d\xi=0,
$
 we obtain
 \bbe\label{cpdf2}
F(x')=\frac{1}{2}+J_{-1}(\al,\Cp)+F_{1}(x'),
\ee
where 
\beqa\label{J-1}
J_{-1}(\al,\Cp)&=&\frac{1}{2\pi}\int_\bR\frac{e^{-\psi^0_{st}(\xi)}-e^{-|\xi|}}{-i\xi}d\xi,\\\nonumber
F_{1}(x')&=&\frac{1}{2\pi}\int_\bR \frac{e^{-ix'\xi}-1}{-i\xi}e^{-\psi^0_{st}(\xi)}d\xi\\\nonumber
&=&\frac{1}{\pi}\Re \int_0^{+\infty} \frac{e^{-ix'\xi}-1}{-i\xi}e^{-\psi^0_{st}(\xi)}d\xi\\\label{JRN2}
&=&-\frac{1}{\pi}\Im \int_0^{+\infty} \frac{e^{-ix'\xi}-1}{\xi}e^{-\Cp\xi^\al(\xi)}d\xi.
\eqa
We calculate $J_{-1}(\al,\Cp)$ as follows.
Let $a=\Re \Cp, b=\Im \Cp$. If $b=0$, then $J_{-1}(\al,\Cp)=0$. We calculate the derivative
\beqast
\partial_{b} J_{-1}(\al,\Cp)&=&\frac{1}{\pi }\Re\int_0^{+\infty}\xi^{\al-1}e^{-(a+ib) \xi^\al}d\xi\\
&=&
\frac{1}{\pi\al}\Re (a+ib)^{-1}\\
&=&\frac{1}{2\pi\al}[(a+ib)^{-1}+(a-ib)^{-1}],
\eqast
and integrate
$
J_{-1}(\al,\Cp)=\frac{1}{2\pi i\al}\ln\frac{a+ib}{a-ib}.
$
Thus,
\bbe\label{Jm1}
J_{-1}(\al,\Cp)=\frac{1}{2\pi i\al}\ln\frac{\Cp}{\Cm}=\frac{\varphi_0}{\al\pi}.
 \ee
The function  $F_{1}(x')$ can be calculated as the integral on the RHS of \eq{difp2}. 
The simplified trapezoid rule is
 \beqa\label{cpdfSL2y2}
 F_{1}(x')
 &=& -\frac{\ze }{\pi}\Im \left(\sum_{j=-N_-}^{N_+}f(j\ze)\right),
 \eqa
 where 
 $
 f(y)=e^{-ix'e^{i\om+y}-C_+e^{\al(i\om+y)}}-e^{-\Cp e^{\al(i\om+y)}}
 $. 
Parameters $\ga^\pm_0, \om, d$ are chosen as for
the pdf. We use an approximate bound for the Hardy norm  $H\le H_++H^0_++H_-+H^0_-$, where $H_\pm$ are
the same as in the case of the pdf, and $
H^0_\pm=(1+e^{x'\sin(\om\pm d)})e^{-c_\infty(\om\pm d)}.
$
As in  Sect.~ \ref{choice_ze}, we set $\ze=2\pi d/\ln(10 H/\eps)$. 
The  limits in the simplified conic
trapezoid rule are defined as follows. First, $N_-=\mathrm{ceil}\, (\ln(|x'|/(\eps\pi))/\ze)$.
 Next, the truncation parameter $\La_+=N_+\ze$, hence, $N_+$, is
determined from $\La_+=\ln\La_1$, where $\La_1$ satisfies
\bbe\label{La1tJ000con2}
 \frac{1}{\pi }\int_{\La_1}^{+\infty}\rho^{-1}\left(1+e^{x'\sin(\om)\rho}\right)e^{-c_\infty(\om)\rho^\al}d\rho<\eps.
\ee
Asymptotic expansions in the left truncated tail of the sum can be done similarly to the case of the pdf.
 As $y\to-\infty$,
 \bbe\label{fcpdfnuneq1}
 f(y)=-ix'e^{i\om}e^{y}+2R_2(y),
 \ee
 where
 \bbe\label{boundR2cpdfnuneq1}
 |R_2(y)|\le C(y)((x')^2 e^{2y}/2+|x'||\Cp|e^{(1+\al)y}),
 \ee
 and $C(y)\to 1$.
  Hence, we set
   $\La_{-,12}=\frac{\ln(|x'\Cp|/(\eps\pi))}{1+\al}$,  $\La_{-,11}=\ln(2 x'^2/(\eps\pi))/2$,
 $\La_-=\min\{ \La_{-,11}, \La_{-,12}\},$ \\ 
 $N_-=\mathrm{ceil}\, (\La_-/\ze)$,
 calculate $S_{10}(\ze, N_-)$, 
 and add the term $-ix'e^{i\om}S_{10}(\ze, N_-)$ inside the brackets on the RHS of \eq{cpdfSL2y2}:
 \beqa \nonumber
  F_{1}(x')&=&-\frac{\ze }{\pi}\Im \left(\sum_{j=-N_-}^{N_+}f(j\ze)-ix'e^{i\om}S_{10}(\ze, N_-)\right)\\\label{cpdfSL2y3}
&=& \frac{x'\cos(\om)\ze}{\pi}S_{10}(\ze, N_-)-\frac{\ze }{\pi}\Im\sum_{j=-N_-}^{N_+}f(j\ze).
 \eqa
 
 \subsection{The case of $\al< 1$ and large $|x'|$}\label{conic_cpdfnuLeq1}
 Let $c_0>0$ and $\om_0\in(-\pi/2,\pi/2)$. Similarly to \eq{J-1} and \eq{Jm1}, 
we calculate
\beqast\nonumber
J(c_0e^{i\om_0})&=&\frac{1}{2\pi}\int_{-\infty}^{+\infty}\frac{e^{-c_0|\xi|e^{i\om_0\mathrm{sign}\,\xi}}-e^{-|\xi|}}{-i\xi}d\xi
=\frac{\om_0}{\pi}
\eqast
and obtain
 \beqa\label{cpdf_conic_nuLe1}
 F(x')&=&\frac{1}{2}+\frac{\om_0}{\pi}
+\frac{1}{\pi}\Re\int_0^{+\infty}\frac{e^{-ix'\xi-\Cp\xi^\al}-e^{-c_0e^{i\om_0}\xi}}{-i\xi}d\xi.
 \eqa
 We choose $\ga^\pm_0$, $\om$ and $d$ as in the case of the pdf,  
 and change the variable $\xi=e^{i\om+y}$: 
\beqast
F(x')&=&\frac{1}{2}+\frac{\om_0}{\pi}
 -\frac{1}{\pi}\Im \int_\bR
\left
(e^{-ie^{i\om}x'e^y-\Cp e^{i\al\om}e^{ay}}-e^{-c_0 e^{i(\om+\om_0)}e^y}\right)dy.
 \eqast
We set  $c_0=|x'|$, and pass to the limit $\om_0\to \mathrm{sign}\,x' \pi/2$:
\beqast\label{cpdf_conic_nuLe12}
 F(x')&=&\frac{1+\mathrm{sign}\,x'}{2}
- \frac{1}{\pi}\Im \int_{-\infty}^{+\infty}f(y)dy,
 \eqast
 where  $f(y)=e^{-ie^{i\om}x'e^y}(e^{-\Cp e^{i\al\om}e^{ay}}-1)$.
Next, we apply the simplified trapezoid rule. 
 \bbe\label{cpdfSL2yle12}
F(x')=\frac{1+\mathrm{sign}\,x'}{2}
 -\frac{\ze}{\pi}\Im \sum_{j=-N_-}^{N_+}f(y_j).
 \ee
  The parameters $d, \ze, N_+$ are chosen as in the case of the pdf - see Sect.~ \ref{conicpdfnuLeq1},
 and the asymptotic expansions and corresponding choices of $N_-$ are modified in the straightforward fashion.
 In all cases, the operator $\Re$ is replaced by $-\Im$, the factor $e^{i\om+y}$ is absent, hence, the tail decreases slower, the coefficients 
 have to be multiplied by $e^{-i\om}$, and $S_{rs}(\ze,N_-)$, $S^k_n(\ze,N_-)$ are replaced with
 $S_{r-1,s}(\ze,N_-)$, $S^k_{n-1}(\ze,N_-)$.     We use
the Taylor expansion of the exponent of order 1 or 2. 

\sbr\noindent
{\em Ord 1.}  We define $\La_{11}=\ln(2|x'\Cp|/(\eps\pi))/(1+\al)$, 
$\La_{12}=\ln(|\Cp|^2/(\eps\pi))/(2\al)$, $\La_1=\max\{\La_{11},\La_{12}\}$,\\ $N_-=\mathrm{ceil}(\La_1/\ze)$,
calculate $S_{01}(\ze,N_-)$,
and then 
 \beqa\label{cpdf_conic_nuLe131}
 F(x')&=&\frac{1+\mathrm{sign}\,x'}{2}
 -\frac{\ze}{\pi}\Im\left(\sum_{j=-N_-}^{N_+}
 f(j\ze)-\Cp e^{i\om\al}S_{01}(\ze,N_-)\right).
 \eqa

\sbr\noindent
{\em Ord 2.}  We define $\La_{21}=\ln(3(x')^2|\Cp|/(2\eps\pi))/(2+\al)$, 
$\La_{22}=\frac{\ln(3|x'\Cp^2|/(2\eps\pi))}{1+2\al}$, 
$\La_{23}=\frac{\ln(|\Cp^3|/(2\eps\pi))}{3\al}$, \\
$\La_1=\max_j\La_{2j}$,  $N_-=\mathrm{ceil}(\La_1/\ze)$,
calculate $S_{01}(\ze,N_-)$, $S_{11}(\ze,N_-)$  and $S_{02}(\ze,N_-)$,
and then 
 \beqa\nonumber
 F_(x')&=&\frac{1+\mathrm{sign}\,x'}{2}
- \frac{\ze}{\pi}\Im\left(\sum_{j=-N_-}^{N_+}
 f(j\ze)-\Cp e^{i\om \al}S_{01}(\ze,N_-)\right.\\\label{cpdfSL2yLe12}
 &&+\left.ix'\Cp e^{i(1+\al)\om} S_{11}(\ze,N_-)
 +\frac{\Cp^2e^{2i\om\al}}{2}S_{12}(\ze,N_-)\right).
 \eqa

 \subsection{Further simplifications}\label{NuLe1further_cpdf}
If $|x'|+|\Cp|\cos(\varphi_0-\mathrm{sign}\,x' \al\pi/2)$ is positive and not very small, then we simplify \eq{cpdf_conic_nuLe12} letting
   $\om=-\mathrm{sign}\, x' \pi/2$;
  the number of elementary operations needed to calculate the individual terms in the simplified trapezoid rule
  decreases. The parameter $d_0$ needed to choose $\ze$ is determined as
  in Sect. \ref{NuLe1further}. Since there is no  sizable loss in the width of the strip of analyticity, the number of terms
  remains approximately the same, and the total CPU time decreases. Then we change the variable to obtain, for $\pm x'>0$,
 \beqa\label{cpdf_conic_nuLe13}
 F(x')&=&\frac{1\pm 1}{2}-
 \frac{1}{\pi}\Im\int_\bR\left(e^{-|x'|e^y-\Cp e^{\mp i\frac{\al\pi}{2}}e^{\al y}}-e^{-|x'|e^y}\right)dy.
 \eqa
 The second term under the integral sign is real and can be omitted: 
  \beqa\label{cpdf_conic_nuLe14}
F(x')&=&\frac{1\pm 1}{2}
 +\frac{1}{\pi}\int_\bR e^{-|x'|e^y-|\Cp| \cos(\varphi_0\mp \frac{\al\pi}{2})e^{\al y}}
 \sin(|C_+|\sin(\varphi_0\mp \frac{\al\pi}{2})e^{\al y})dy.
 \eqa
 The representation of
 the integrand in the form \eq{cpdf_conic_nuLe13} is convenient for the choice of the parameters
 of the simplified trapezoid rule (and justification of the application of this rule) because
 \[
 f(y)=e^{-|x'|e^y-\Cp e^{\mp i\frac{\al\pi}{2}}e^{\al y}}-e^{-|x'|e^y}
 \] decays fast as $y\to \infty$ remaining
 in a strip around the real axis, and the upper bounds are easy to obtain. Should we decide to write the
 imaginary part of the integrand explicitly, the derivation of the upper bounds would be not so straightforward.
 We leave to the reader the substitution  $\om=-\mathrm{sign}\, x' \pi/2$ in the coefficients 
 on the RHSs of \eq{cpdf_conic_nuLe131} and
 \eq{cpdfSL2yLe12}. 

\section{Exponential acceleration. CPDF, $\al=1$}\label{coniccpdfnu1}
We modify \eq{cpdf_conic_nuLe1}
 \beqa\label{cpdfconicnuneq1}
F(x')&=&\frac{1}{2}+J_{-1}(1,C_0)-\frac{1}{\pi}\Im \int_0^{+\infty}\frac{e^{-ix'\xi-\psi^0(1,\sg,\be,\xi)}-e^{-C_0\xi}}{\xi}d\xi,
\eqa
where $C_0=c_0 e^{-i\om}$, $c_0>0$, $\om$ is chosen as in the case of the pdf,
\[
J_{-1}(1,C_0)=\frac{1}{\pi}\Re\int_{0}^{+\infty}\frac{e^{-C_0 |\xi|}-1}{-i\xi}d\xi
\]
is calculated similarly to $J_{-1}(\al,\Cp)$ (see \eq{Jm1}): we differentiate the integral w.r.t. $C_0$,
calculate the derivative and integrate. The result is
\[
 J_{-1}(1,C_0)=\Re\frac{1}{\pi i}\ln C_0=-\frac{\om}{\pi}.
 \]
The integral on the RHS of \eq{cpdfconicnuneq1} is calculated using
the simplified conic trapezoid rule:
\bbe\label{cpdfSL2ynu10}
 F(x')=\frac{1}{2}-\frac{\om}{\pi}-\frac{\ze }{\pi}\Im\sum_{j=-N_-}^{N_+}f(j\ze),
 \ee
 where 
 \[
f(y)= \exp[e^{i\om+y}(-(ix'+\sg(1-2\be\om/\pi))e^y -i(2\sg\be/\pi)ye^y))]
 \]
 (we take into account that 
$-\exp(-c_0 e^y)$ is real, hence, can be omitted).  As an approximate upper bound for the Hardy norm,
we take  $H\le 4+|f(-id)|+|f(id)|+H_++H_-$, where $H_\pm$ are the same as in the case of the pdf.
Then we set $\ze=2\pi d/\ln(10H/\eps)$. For simplicity, we use the same truncation parameters
$\La_{1,+}$ and $\La_+=\ln \La_{1,+}$, hence, $N_+$, as in the case
of the pdf. However, if $\La_{1,+}$ is large, this prescription yields an unnecessary large $N_+$.
For large $\La_{1,+}$, a simple fairly accurate improvement can be obtained as follows: find $\La_{1,+}=\La_{1,+}(\eps)$
using the procedure for the pdf, then reassign $\eps:=\eps\La_{1,+}$, and apply the prcedure
for the pdf with the new $\eps$.  Finally,
 set $\La_+=\ln \La_{1,+}$,  $N_+=\mathrm{ceil}\,(\La_+/\ze)$.

 We can use \eq{cpdfSL2ynu10} with $N_-=\mathrm{ceil}\,(\La_-/\ze)$, where
$\La_-=\ln(1/\eps)$, which requires an unnecessary large $N_-$. To decrease $N_-$, we use the asymptotic expansion
\beqast
f(y)&=&-((ix'+\sg(1-\frac{2\be\om}{\pi})e^{i\om}-c_0)e^{y}+i\frac{2\sg\be}{\pi}e^{i\om}e^{y}(-y)
+C(y)e^{2y}R_2(y),
 \eqast
 where $C(y)\to 1$ as $y\to-\infty$, and $R_2(y)$ admits the upper bound \eq{boundR2pdfnu1}.
 We write the RHS of \eq{boundR2pdfnu1} as $a_0+a_1y+a_2y^2$, define $\La_{-,1}=\ln(3a_0/(\eps\pi))/2$,
 find $\La_{-,j}$ as
 an approximate solution to $e^{-2\La}\La^j=\eps\pi/(3a_j)$, $j=1,2$, set $\La_-=\max\La_{-,j}$, $N_-=\mathrm{ceil}\,(\La_-/\ze)$.
We obtain the formula similar to \eq{cpdfSL2ynu10},
with two additional terms:
\beqa\label{cpdfSL2ynu1} 
 F(x')&=&\frac{1}{2}-\frac{\om}{\pi}
 -\frac{\ze }{\pi}\Im
 \left(\sum_{j=-N_-}^{N_+}f(j\ze)-A_0S_{10}(N_-)+A_1S^1_1(N_-)\right),
\eqa
 where $A_0=((ix'+\sg(1-\frac{2\be\om}{\pi})e^{i\om}-c_0), A_1=i e^{i\om}\ze \frac{2\sg\be}{\pi}$.

If $x'\be>0$, \eq{cpdfSL2ynu1} can be simplified 
 letting $c_0=|x'|$ and passing to the limit
$\om\to -\mathrm{sign}\, x'(\pi/2)$.   We obtain
 \beqast\label{cpdfSL2ynu1xpgen}
 F(x')&=&\frac{1+\mathrm{sign}\, x'}{2}
 - \frac{\ze}{\pi}\left(\Im\sum_{j=-N_-}^{N_+}f(j\ze)+\mathrm{sign}\,x' \sg(1+\be)S_{10}(\ze,N_-)\right),
\eqast
where $f(y)=\exp[(i\sg (1+\be)-x')e^y-(2\sg\be/\pi)e^y y].$ Below,  we consider the case $x'>0, \be>0$;
the case $x'<0, \be<0$ is by symmetry.

 Using the second order Taylor expansion of the
exponential function, we derive from \eq{cpdfSL2ynu1}
\beqast
 F(x')&=&\frac{1+\mathrm{sign}\, x'}{2}
- \frac{\ze }{\pi}\left(\Im\sum_{j=-N_-}^{N_+}f(j\ze)
+\sg(1+\be)S_{10}(\ze,N_-)-x'\sg(1+\be)S_{20}(\ze, N_-)
\right.\\\nonumber
 &&\left.
 +\frac{2\sg^2\be(1+\be)\ze}{\pi}S^1_2(\ze,N_-)\right).
\eqast
Here $N_+$ is as above, and $N_-=\mathrm{ceil}\,(\La_-/\ze)$,
where $\La_-=\max\{\La_{1,-}, \La_{2,-}\}$,  $\La_{1,-}=\log(B_1/(4\eps\pi))/3$, \\ $B_1=(4/3)|ix'+\sg(1+\be)|^3$,
and $\La_{2,-}$ is the solution of the equation $e^{-3\La}\La^3=\eps\pi/B_2$, where $B_2=(4/3)(2\be\sg/\pi)^3$.

\section{Derivatives of PDF and unification}\label{der_and_quantiles}

\subsection{Evaluation of $p'$} The methodology is essentially the same as in the case of the pdf.
Calculation of the terms of the asymptotic expansion for the integral on the RHS of \eq{derpdfSL} is essentially the same
(an additional factor $-i\xi$ appears), and the residual term is calculated using the same
$\ga^\pm_0$, $\om$, $d$. The bound for the truncation error requires a marginally larger $\La$ due to an additional factor $-i\xi$ which increases at infinity. A small increase in $N_+$ is compensated by a sizable
decrease in $N_-$. 

\subsection{Evaluation of $\partial_\be p(\al,\sg,\be, x')$, $\partial_\sg p_{st}(\al,\sg,\be, x')$ 
and $\partial_\al p(\al,\sg,\be, x')$} If $\al\neq 1$, then
\beqa\nonumber
\partial_\be p(\al,\sg,\be, x')
&=&\frac{\sg^\al\tan(\al\pi/2)}{\pi}\Re \int_0^{+\infty}i\xi^\al e^{-ix'\xi-\Cp(\al,\sg,\be)\xi^\al}d\xi\\\label{derpbe}
&=&-\frac{\sg^\al\tan(\al\pi/2)}{\pi}\Im \int_0^{+\infty}\xi^\al e^{-ix'\xi-\Cp(\al,\sg,\be)\xi^\al}d\xi.
\eqa
If $\al=1$, then
\beqa\nonumber
\partial_\be p(1,\sg,\be, x')
&=&\frac{2\sg/\pi}{2\pi}\Re \int_0^{+\infty}(-i\xi \ln\xi)e^{-ix'\xi-\psi^0_{st}(1,\sg,\be,\xi)}d\xi\\\label{derpbenu1}
&=&\frac{\sg}{\pi^2}\Im \int_0^{+\infty}\xi \ln\xi e^{-ix'\xi-\psi^0_{st}(1,\sg,\be,\xi)}d\xi.
\eqa
The integrals on the RHS of \eq{derpbe} and \eq{derpbenu1} are calculated using the    simplified conic trapezoid rule.
The parameters $\ga^\pm_0, \om, d$ are as in the case of the pdf, and the bounds for the Hardy norm and truncated tails are modified in
the evident manner. The resulting $\ze$ and $N_-$ are smaller,  and $N_+$ larger than the ones for the pdf.
The derivatives $\partial_\sg p(\al,\sg,\be, x')$ 
and $\partial_\al p(\al,\sg,\be, x')$ are calculated in a similar fashion.

\subsection{Unified schemes} If $\al\in (0,1)$, then, in all the integrals above, one can justify the change of variables $ix'\xi=y$ 
and reduce the calculations to an integral of the form
\bbe\label{universal}
I(v, v_1; a, A)=A\int_0^{+\infty} y^{-v_1}e^{-y-ay^v}dy,
\ee
where $v=\al\in (0,1)$, $v_1, a, A\in \bC$. If $\al\in (1,2)$, then, in all the integrals above, one can justify the change of variables $\Cp\xi^\al=y$, and reduce to 
 the calculations to an integral of the form
\eq{universal}, where $v=1/\al\in (0,1)$, $v_1, a, A\in \bC$.
 Hence, one can design a general procedure for all integrals in the case $\al\neq 1$.
We will study this possibility in the future. 

If the exponential change of variables is inefficient,  it may be advantageous to use the polynomial or sub-polynomial acceleration.
If $\al<1$ is small, then both require an unnecessary large number of terms. The number of terms can be made much smaller using the preliminary
changes
of variables: either $\Cp\xi^\al=i\xi_1$, or $\Cp\xi^\al=-i\xi_1$, where $\xi_1$ runs over $\bR_+$. The first one is applied if $x'<0$ and $\be>0$,
and the second one is applied if $x'>0, \be<0$. In both cases, the change of variables is possible only if $\al\in (1/3,1)$
The resulting integrals can be formally interpreted as the ones for stable L\'evy processes of index $\al\in (1,3)$. Naturally,
stable L\'evy processes of order $\al\ge 2$ do not exist, but the integrals that define the pdf and cpdf are well-defined for any $\al>0$,
hence, there is no contradiction in this approach.

\section{Quantiles and Monte Carlo simulations}\label{algquantile}  
We consider evaluation of quantiles $x_a$, that is, solution
of the equation $F(x)=a$, where $a\in (0,1)$ and $F$ is the cumulative
distribution function; once an efficient procedure for quantile evaluation is available,
the procedure can be used for the Monte Carlo simulations.

If $Z$ is any random variable with continuous distribution, one can simulate $Z$ sampling a uniformly distributed random variable $U$ on $(0,1)$ and calculate $F^{-1}(U)$, where $F$ denotes the cumulative distribution function of $Z$. When an explicit formula for $F^{-1}$ is not available, it becomes important to be able to calculate its values very quickly and sufficiently accurately. A straightforward approach that was used with a limited success\footnote{The tails of the distributions decay too slowly, hence, the Monte Carlosimulations are moderately efficient only
if the index of the process is close to 2, and the distribution does not differ much from the normal distribution, with the exception
of far parts of the tails, which can be safely disregarded in this case.}  for simulation
of stable L\'evy processes is  as follows. One tabulates
the values of $F$ on a sufficiently long and fine grid of points $x_0,x_1,\dots,x_M$ on the real line and approximates $F^{-1}$ using linear interpolation.
This method is very attractive from the practical viewpoint, because the values $F(x_i)$ only have to be calculated once, and afterward the computational cost of each simulation of $Z$ is extremely low: one has to draw a sample $a$ of $U$, find $j$ satisfying $F(x_j)\leq a<F(x_{j+1})$ (which requires about $\log_2(M)$ comparisons)
and perform 4--5 arithmetic operations required for linear interpolation to find $x_a$.
If $a<F_1$, one assigns $x_a=x_1$, and if $a>F_M$, then
one assigns $x_a=x_M$.

In the application to the Monte Carlo simulations, this method  has 3 sources of errors:
\begin{enumerate}[(1)]
\item
truncation error;
\item
errors of linear interpolation;
\item
errors of evaluation of $F_k$.
\end{enumerate}
The simplified conic trapezoid rule allows us to calculate $F_k=F(x_k)$ very accurately and fast;
if $\al\in (0,1)$ is not too close to 1, then the calculations are especially fast for $x_k$ large in absolute value.

Numerical experiments demonstrate that, given the parameters of the distribution,
one can use the same grid of a moderate size (150-300)
for calculations in a very large region in the tail of interest; furthermore, the parameters that define the grid
vary slowly as the parameters of the distribution vary. Hence, one can precalculate the expression in the exponent (bar the factor $e^{-ix' \xi(y)}$),
and the factor(s) outside the exponential sign
needed for the calculation of the pdf and cpdf
at points of an appropriate grid (we suggest to call these precalculated arrays {\em conformal principal components})
and use these arrays to calculate the pdf and cpdf very fast for $x$ that will appear in the iteration procedure
for the calculation of the quantile. In addition, one needs to precalculate several scalars used in the correction terms.

Thus, it is unnecessary to truncate the state space. Instead, it suffices to store the array of values
in a region $F(x)\in [0.001, 0.999]$ (or $F(x)\in [0.01,0.99]$) and use the array and an interpolation procedure 
if a simulated $a\in [0.001,0.999]$
(resp., in $[0.01,0.99]$). 
The conformal principal components
are used when a simulated $a$ is outside this region.    Note that a different set
of conformal principal components can be used for  fast tabulation of $p$ and $F$ in the region $[0.001,0.999]$
(resp., $[0.01,0.99]$). 

Below, we give an explicit scheme for calculation quantiles in the left tail, for processes of index $\al\in (0,1)$. We assume that the quantiles of interest
are in a region of $x_a$ such that after a rescaling $\xi\mapsto 10^{N_{sc}}\xi$, $x'/(\Cp \sigma^\al)$ is not small so that the simplest choice $\om=\pi/2$ is possible.
The scaling parameter $N_{sc}$ should be moderate, e.g., in the region $[-2,10]$. 

Assume that, for a given $a$,  one knows an interval $[x_-, x_+]\subset (-\infty, 0)$ where
$x_a$ is. As the numerical examples shown in Tables 12-14 demonstrate, efficient calculations are possible even if the interval is very wide. 
Assuming that the simplified conic trapezoid rule  is applicable for $x_+$, with $\om=\pi/2$, the same rule is applicable for all $x<x_+$. Typically, for $x<x_+$, $\ze$ and $\La_+$ are smaller than for $x_+$, and $\La_-$ larger.
To calculate the conformal principal components which can be used to evaluate $p(x)$ and $F(x)$ for all $x\in [x_-, x_+]$, we must use one set of parameters of the simplified trapezoid rule.

\paragraph{Algorithm for calculation of quantiles in the left tail, for processes of order $\al\in (0,1)$. }
\begin{enumerate}[1.]
\item Set $\om=\pi/2, d=k_d\om$.
\item Choose $N_{sc}$, $ord$ and a small error tolerance $\eps$.
\item
Using the recommendations for cpdf, choose $\ze$ and $\La_-$ for $x_-$, and  $\La_+$ for $x_+$. 
\item
To increase the accuracy of calculations at a small cost in the CPU time, choose $k_\ze\in [1.2, 1.5], k_{\La_-}\in [1.3, 1.5], k_{\La_+}\in [1, 1.15]$,
and reassign $\ze:=\ze/k_\ze$, $\La_-:=\La_-k_{\La_-}$ and $\La_+=\La_+k_{\La_+}$ if $\La_+\ge 0$, and $\La_+=\La_+/k_{\La_+}$
\item
Set $N_\pm=\mathrm{ceil}\, (\La_\pm/\ze)$.

\item
Calculate and store arrays $\vec y=\ze*(N_-:1:N_+)$, $E_y=\exp(\vec y)$, $EC_\al=\Re (\Cp e^{i\om \al})*\exp(\nu*\vec y)$,  
$SIN_\al=\sin(\Im (\Cp* e^{i\om \al})*\exp(\al*\vec y))$.
\item
Calculate and store scalars $S_{jk}(\ze, N_-)$ needed to correct the truncation errors
of the simplified trapezoid rule for pdf and cpdf, and the correction terms independent of $x'$.
\item
If the Newton method is used, write the function $x'\to (p_0(x'), F_0(x'))$
\begin{eqnarray}\label{itmneg}
Int&=&\exp(x'*E_y-EC_\al).*SIN_\al\\\label{Intnegtailcpdf}
F_0(x')&=&(\ze/\pi)*sum (Int)\\\label{negtailcpdf}
p_0(x')&=&(\ze/\pi)*sum (E_y.*Int).
\end{eqnarray}
At each step of the iteration procedure, use this function and add correction terms.
\item
If the bisection method is used, write the function $x'\to F_0(x')$.
At each step of the iteration procedure, use this function and add correction terms.
\end{enumerate}

\section{Completely asymmetric stable L\'evy processes}\label{one-sided}
If $X$ is completely asymmetric, that is, $\be=-1$ or $\be=1$, equivalently, either $\cp=0$ or $\cm=0$, then the characteristic exponent
admits analytic continuation to the complex plane with the cut along of one of the imaginary half-axis, hence, the sinh-acceleration can be
applied. By the  symmetry argument, it suffices to consider the case $\be=-1$. In this case, there exists
$a\ge 0$ s.t. $p(x')=0$ for $x'>a$.

\paragraph{Case $\al\neq 1$, $\be=-1$}\label{sinhnune1} 
We choose the parameters of the sinh-acceleration so that
$\om_1+b\sin\om'<0$ for any $\om'\in[-\om,\om]$.  Deforming the contour of integration,
making the change of variable $\xi=i\om_1+b\sinh(i\om+y)$,  we obtain
\beqast
p(x')&=&be^{\om_1x'}p^{norm}(x'),
\eqast
where 
\beqast
p^{norm}(x')&=&\frac{1}{2\pi}\int_\bR f(y)dy,\\
f(y)&=&\cosh(i\om+y)f_1(y)\\
f_1(y)&=&e^{-ix'b\sinh(i\om+y)+c_-\Ga(-\al)(-\om_1+ib\sinh(i\om+y))^\al}.
\eqast
The parameters $\om, b, \om_1$ are chosen as follows.
We choose $\gam_0$ and $\gap_0$ as in Sect.~\ref{choice_cone} for the case
 $x'<0$. We set $\om=(\gam_0+\gap_0)/2$, $d_0=(\gap_0-\gam_0)/2$. 
 In order that the Hardy norm be finite and not exceedingly large, it is necessary
 that the integrand in the $y$-coordinate decays fast as $y\to\infty$ along the boundaries $\Im y=\pm d$
  of the trip of analyticity. Hence, we take $k_d<1$,
 e.g., $k_d\in [0.8,0.95]$, and set $d=k_dd_0$.  Next, take $\om_1<0$, set $b_0=-\om_1/\sin(\om)$,
 choose $k_b\in [0.8,0.95]$ and set $b=k_bb_0$. 
  
 If $\eps>0$ is the error tolerance for $p(x')$, then $\eps_1=\eps e^{-\om_1x'}$ is the error
tolerance for $p^{norm}(x')$. We calculate $H_\pm$ as in Sect.~\ref{choice_ze}, and set $H=|f(-d)|+|f(d)|+H_++H_-$,
$\ze=2\pi d\ln (10H/\eps_1)$. We calculate $\La_1$ as
 in Sect.~\ref{choice_N_+}, for the error tolerance $\eps_1$, and set $\La=\ln(2\La_1/b)$, $N=\mathrm{ceil}\,(\La/\ze)$.
 Then we apply the simplified trapezoid rule
 \bbe\label{sinhpdf}
 p^{norm}(x')=\frac{\ze}{\pi}\Re\left(0.5 f(0)+\sum_{1\le j\le N}f(j\ze)\right).
 \ee
Since we have to use $\om_1<0$, and $b<-\om_1/\sin(\om)$,
it follows that for the calculations 
in the left tail, a very large number of terms in the simplified trapezoid rule is needed. 
Hence, if $x'$ is very large in the absolute value, we have to use a very small $\om_1$, and then $N$
is large. 
 
 For the cpdf, the calculation is similar
\beqast
F(x')
&=&\frac{be^{\om_1x'}}{2\pi}\int_\bR f(y)dy,
\eqast
where
\beqast
f(y)&=&\frac{\cosh(i\om+y)}{-\om_1+ib\sinh(i\om+y)}f_1(y),\\
\eqast
and $f_1$ is the same as in the case of pdf.
The choice of the parameters of the sinh-acceleration is a modification
of the choice in Sect.\ref{coniccpdfnu1} similar to the modification in the case of pdf. The simplified trapezoid rule
is
 \[
 F(x')=be^{\om_1x'}\frac{\ze}{\pi}\Re\left(0.5 f(0)+\sum_{1\le j\le N}f(j\ze)\right).
 \]

\paragraph{Case $\al=1$, $\be=-1$}\label{sinhnu1}
Deforming the contour of integration, and
changing variable $\xi=i\om_1+b\sinh(i\om+y)$,  we obtain
\beqast
p(x')&=&\frac{be^{\om_1x'}}{2\pi}\int_\bR \cosh(i\om+y) e^{\psi(y)}dy,
\eqast
where
\beqast
\psi(y)&=& -ib\sinh(i\om+y)x'
+i(i\om_1+b\sinh(i\om+y))c_-\ln(-\om_1+ib\sinh(i\om+y))).
\eqast
For the cpdf, the calculation is similar, and the choice of the parameters is modified in the same manner
as in the case $\al\neq 1$.

\section{Polynomial acceleration}\label{s:Polynom}
For the sake of brevity, we apply polynomial acceleration in the case $\al\neq 1$ only.
\subsection{Preliminaries}\label{ss:prelim}
Using \eq{pdfstable}, \eq{cpdf2}, \eq{JRN2}, \eq{Jm1}, we have
\beqa\label{Pp}
p(x')&=&p(0)+\frac{1}{\pi}\Re\int_0^{+\infty} (e^{-ix'\xi}-1)e^{-\Cp\xi^\al}d\xi,\\\label{PF}
F(x')&=&\frac{1}{2}+\frac{\varphi_0}{\pi\al}+\frac{1}{\pi}\Re\int_0^{+\infty}\frac{e^{-ix'\xi}-1}{-i\xi}e^{-\Cp\xi^\al}d\xi.
\eqa
Polynomial and sub-polynomial accelerations can be advantageous only in  cases when
the exponential acceleration may require very large number of terms of the simplified trapezoid rule.
These cases are
\vskip0.1cm
\noindent
{\em Case $(++)$} $\al\in (0,1), x'>0, \be\in (0,1]$ (\& $\varphi_0<0$).
\vskip0.1cm
\noindent
{\em Case $(--)$} $\al\in (0,1), x'<0, \be\in [-1,0)$ (\& $\varphi_0>0$).
\vskip0.1cm
\noindent
{\em Case $(-+)$} $\al\in (1,2), x'<0, \be\in (0,1]$ (\& $\varphi_0>0$).
\vskip0.1cm
\noindent
{\em Case $(+-)$} $\al\in (1,2), x'>0, \be\in [-1,0)$ (\& $\varphi_0<0$).
\vskip0.1cm
In Cases $(-+)$ and $(--)$, we make the change of variables \eq{pol1/2m}, and in Cases $(+-)$ and $(++)$,
 the change of variables \eq{pol1/2p}.  After that, in all cases, we make the change of variables \eq{yPchange}.
 The choice of the parameters $a, m, b$ is determined by the requirement that, in the $y$-coordinate,
 both of the factors $e^{-ix'\xi(\eta(y))}$ and $e^{-C_+\xi(\eta(y))^\al}$ decay in any strip $S_{(-d,d)}$, $d\in (0,1)$ as $y\to\infty$ remaining 
 in the strip;
 it is important that the rate of decay is as large as possible. Note that if $\al<1$ is sizably smaller than 1,
 then it is advantageous to make a preliminary change
 of variables and reduce to the case $\al>1$. The reduction can be justified if $|\varphi_0|<\pi(\al-0.5)$; this condition implies that $\al>1/3$.

\begin{lem}\label{lem:stripypol} 
For any $m\ge 1$, there exists $b>0$ such that  functions $y\mapsto \eta(y)$ and  $y\mapsto \xi(\eta(y))$ admit analytic continuation
to $S_{(-1,1)}$.
\end{lem}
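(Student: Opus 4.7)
The plan is to handle $\eta(y)$ and $\xi(\eta(y))$ separately, by first extracting an explicit description of the image domain $D := \eta(S_{(-1,1)})$ and then reducing analyticity of $(1+ib\eta^m)^a$ on $D$ to a single nonvanishing condition.

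First, $\sqrt{y^2+1}$ has branch points precisely at $y = \pm i$, which lie on $\partial S_{(-1,1)}$. Placing the branch cuts along the vertical rays $[i, i\infty) \cup (-i\infty, -i]$ (disjoint from $S_{(-1,1)}$) and fixing the branch by $\sqrt{y^2+1} > 0$ on $\bR$ makes $\eta(y)$ analytic on the strip. Setting $y = \sinh t$ gives $\sqrt{y^2+1} = \cosh t$ and $\eta = \tfrac{1}{2} e^t$, so $\eta$ is a biholomorphism from $S_{(-1,1)}$ onto its image $D$. Writing $t = u + iv$, the constraint $|\Im y| = |\cosh u \sin v| < 1$ combined with $|\eta| = e^u/2$ and $\arg \eta = v$ yields the explicit description
\begin{equation*}
D = \left\{\eta \in \bC : \Re \eta > 0,\ |\sin(\arg \eta)| < \tfrac{4|\eta|}{4|\eta|^2 + 1}\right\},
\end{equation*}
a simply connected ``lens'' around $\bR_+$ whose closure meets the imaginary axis only at the two points $\pm i/2$.

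Since $D$ is simply connected, every analytic, nowhere-vanishing function on $D$ admits an analytic logarithm on $D$. Therefore, to ensure that $(1 + ib\eta^m)^a$ (and hence $\xi(\eta(y))$) extends analytically to $D$ (respectively $S_{(-1,1)}$), it suffices to choose $b > 0$ for which $1 + ib \eta^m$ has no zero in $D$. The $m$ zeros of $\eta \mapsto 1 + ib\eta^m$ are the points $\eta_k = b^{-1/m} e^{i(\pi/(2m) + 2\pi k/m)}$, and of these only $\eta_0 = b^{-1/m} e^{i\pi/(2m)}$ has argument in $(-\pi/2, \pi/2)$ when $m \ge 1$, so $\eta_0$ is the only possible obstruction.

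Intersecting the ray $\arg \eta = \pi/(2m)$ with $D$ reduces to the quadratic inequality $4\sin(\tfrac{\pi}{2m})\, r^2 - 4 r + \sin(\tfrac{\pi}{2m}) < 0$ in $r = |\eta|$, whose positive roots are $r_-(m) = \tfrac{1}{2}\tan(\tfrac{\pi}{4m})$ and $r_+(m) = \tfrac{1}{2}\cot(\tfrac{\pi}{4m})$. So the ray meets $D$ precisely in the arc $|\eta| \in (r_-(m), r_+(m))$, and any choice $b \le (2\tan(\pi/(4m)))^m$ forces $|\eta_0| = b^{-1/m} \ge r_+(m)$, placing $\eta_0$ outside $D$. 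This establishes the desired nonvanishing and completes the argument; the sister statement for \eq{pol1/2p} follows by symmetry ($D$ is invariant under complex conjugation). The main delicate point is obtaining the explicit description of $D$ together with its simple connectedness, as this is what reduces the existence of an analytic branch of $(1 + ib\eta^m)^a$ to the one-dimensional scalar bound on $b$.
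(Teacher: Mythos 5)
Your proof is correct, but it proceeds quite differently from the paper's. The paper's proof is soft: it reduces the problem to showing that the curve $y\mapsto y+\sqrt{1+y^2}$, $y\in S_{(-1,1)}$, avoids the ray $2b^{-1/m}e^{\mp i\pi/(2m)}[1,+\infty)$, checks via the asymptotics as $\Re y\to\pm\infty$ that the image avoids the ray outside a compact set, and then invokes compactness to conclude that a sufficiently small $b>0$ works — a pure existence argument with no explicit threshold. You instead compute the image domain $D=\eta(S_{(-1,1)})$ exactly (via $y=\sinh t$, $\eta=e^t/2$), observe that it is a simply connected lens in the right half-plane, reduce analyticity of $(1+ib\eta^m)^a$ to nonvanishing of $1+ib\eta^m$ on $D$, and locate the zeros explicitly, obtaining the quantitative admissible range $b\le(2\tan(\pi/(4m)))^m$. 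This buys more than the paper proves: an explicit bound on $b$ that recovers the paper's remark that $b=2$ is admissible for $m=1$. One small caveat: your nonvanishing criterion guarantees \emph{some} analytic branch of $(1+ib\eta^m)^a$ on $D$ (which, by uniqueness of continuation from $\bR_+$, is the analytic continuation the lemma asks for), whereas the paper's stronger condition $1\pm ib\eta^m\notin(-\infty,0]$ guarantees that the principal-branch formula itself represents that continuation pointwise — the version one actually evaluates numerically. For the lemma as stated, your weaker condition suffices.

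There is one slip in the zero-counting step: the claim that $\eta_0=b^{-1/m}e^{i\pi/(2m)}$ is the only zero of $1+ib\eta^m$ with argument in $(-\pi/2,\pi/2)$ fails for $m>3$. The zeros sit at arguments $(\pi/2+2\pi k)/m$, and for example $m=4$ admits a second zero at argument $-3\pi/8$. Fortunately your final bound still works: all zeros share the modulus $b^{-1/m}$, their arguments satisfy $|(\pi/2+2\pi k)/m|\ge \pi/(2m)$, and the outer radius of $D$ along the ray of angle $\theta$ is $\tfrac12\cot(|\theta|/2)$, which is decreasing in $|\theta|$; hence forcing $b^{-1/m}\ge\tfrac12\cot(\pi/(4m))$ pushes every zero outside $D$, not just $\eta_0$. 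You should state the argument this way rather than via the incorrect uniqueness claim.
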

\begin{proof} The analyticity of $y\mapsto\eta(y)$ is evident. To prove the analyticity of $y\mapsto \xi(\eta(y))$,
it suffices to show that 
\[
1\pm ib(0.5(y+\sqrt{1+y^2}))^m\not\in (-\infty,0],\ y\in S_{(-1,1)},
\]
equivalently, $\pm ib(0.5(y+\sqrt{1+y^2}))^m\not\in (-\infty,-1]$, equivalently,
\bbe\label{Stripy2}
y+\sqrt{1+y^2}\not\in 2b^{-1/m}e^{\mp i\pi/(2m)}[1,+\infty], \ y\in S_{(-1,1)}.
\ee
As $y\to\infty$ remaining in the strip so that $\Re y\to-\infty$, the LHS of \eq{Stripy2} tends to 0,
and as  $y\to\infty$ remaining in the strip $S_{(-1,1)}$ so that $\Re y\to+\infty$, the imaginary part of the LHS of \eq{Stripy2} tends to 0.
Hence, there exists $L$ such that \eq{Stripy2} holds if $|\Re y|\ge L, \Im y\in [-1,1]$. But $\{y+\sqrt{1+y^2}\ | \ |\Re y|\le L, |\Im y|\le 1\}$ is a compact,
and, given a compact $K\subset \bC$, $2b^{-1/m}e^{\mp i\pi/(2m)}[1,+\infty]\not\in K$, if $b>0$ is sufficiently small.
\end{proof}
\begin{rem}{\rm It is easy to show that if $m=1$, then $b=2$ is admissible.}
\end{rem}
If $x'<0$ (resp., $x'>0$), we make the change of variables \eq{pol1/2m} (resp.,  \eq{pol1/2p}). In both cases,  as $\Re y\to+\infty$ 
and $y$ remains in the strip $S_{(-1,1)}$, we have
\[
\xi(\eta(y))\sim e^{-\mathrm{sign} x' i(a-1)\pi/2}b^a\eta(y)^{ma},
\]
hence, if $a\in (1,3)$, then
\[
\Re(ix'\xi(\eta(y)))\sim -|x'|\Re e^{\mp ia\pi/2}|\eta(y)|^\al\to+\infty
\] 
Set $\bar a=1+1/\al-2|\varphi_0|/(\pi\al)$. If $\al\in (1,2)$, then $\varphi_0\in (-\pi/2,0]$, 
and if $\al\in (0,1)$, then $\varphi_0\in [-\al\pi/2,0]$ with the equality $\varphi_0=-\al\pi/2$ possible only if $\be=1$.
Thus, $\bar a>1$ if either $\al\in (1,2)$ or $\be<1$. 
\begin{lem}\label{lem:choice_a} Let $\bar a>1$ and $a\in [1,\min\{3,\bar a\})$. Then 
  $\Re (C_+e^{\xi(\eta(y))^\al})\to +\infty$ as $\Re y\to+\infty$ and $y$ remains in the strip
$S_{(-1,1)}$.
\end{lem}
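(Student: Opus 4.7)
The plan is to track the asymptotic argument and modulus of $C_+\xi(\eta(y))^\al$ as $\Re y\to+\infty$ with $y$ constrained to the strip $S_{(-1,1)}$, and then use the hypothesis $a<\bar a$ to force that limiting argument to lie strictly inside $(-\pi/2,\pi/2)$. First I would record that $\eta(y)=\tfrac12(y+\sqrt{1+y^2})\sim y$ uniformly for $\Im y\in[-1,1]$ as $\Re y\to+\infty$, so $|\eta(y)|\to\infty$ and $\arg\eta(y)\to 0$; this is an elementary expansion of the square root.

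Next, I would plug this into \eq{pol1/2m} or \eq{pol1/2p}. Since $\pm ib\eta(y)^m$ has modulus tending to infinity and argument tending to $\pm\pi/2$, the principal branch of $(1\pm ib\eta(y)^m)^a$ (well-defined by \lemm{lem:stripypol}) satisfies $(1\pm ib\eta(y)^m)^a\sim e^{\pm ia\pi/2}b^a\eta(y)^{ma}$. A direct substitution then yields
\[
\xi(\eta(y))\ \sim\ e^{-\mathrm{sign}(x')\,i(a-1)\pi/2}\,b^a\,\eta(y)^{ma},
\]
as already noted in the paragraph preceding the lemma, and because $a<3$ the limiting argument $\pm(a-1)\pi/2$ stays inside $(-\pi,\pi)$, so the principal branch of $\xi^\al$ is unambiguous. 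Raising to the power $\al$ and multiplying by $\Cp=|\Cp|e^{i\varphi_0}$ gives
\[
\Cp\,\xi(\eta(y))^\al\ \sim\ |\Cp|\,e^{i\theta(y)}\,b^{a\al}\,|\eta(y)|^{ma\al},
\qquad \theta(y)\to \varphi_0-\mathrm{sign}(x')\,(a-1)\,\al\,\pi/2.
\]
Since the modulus $|\Cp|b^{a\al}|\eta(y)|^{ma\al}\to+\infty$, it is enough to verify that the limiting value of $\theta(y)$ lies in $(-\pi/2,\pi/2)$.

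The last step is a short sign-by-sign verification. For $x'<0$ (Cases $(--)$ and $(-+)$) we have $\varphi_0>0$, and the limiting argument equals $\varphi_0+(a-1)\al\pi/2>0$; the strict inequality $\varphi_0+(a-1)\al\pi/2<\pi/2$ rearranges exactly to $a<1+1/\al-2\varphi_0/(\pi\al)=\bar a$. For $x'>0$ (Cases $(++)$ and $(+-)$) we have $\varphi_0<0$, and the analogous rearrangement of $\varphi_0-(a-1)\al\pi/2>-\pi/2$ again gives $a<\bar a$. The main obstacle I anticipate is purely bookkeeping of branches when $a\in[2,3)$, because then $\arg((1\pm ib\eta^m)^a)$ can exceed $\pi$; one has to check that the principal-branch conventions for the two layers of fractional powers ($\xi$ itself and then $\xi^\al$) are compatible with those used implicitly in Subsection \ref{ss:prelim}, and that the $o(1)$ corrections coming from $\arg\eta(y)\to 0$ are uniform in $\Im y\in[-1,1]$. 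Both points reduce to the elementary uniform expansion of $\eta(y)$ established in the first step.
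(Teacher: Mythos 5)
Your proof is correct and follows essentially the same route as the paper's: both reduce the claim to the condition that the limiting argument $\varphi_0-\mathrm{sign}(x')(a-1)\al\pi/2$ of the leading term of $\Cp\,\xi(\eta(y))^\al$ lies in $(-\pi/2,\pi/2)$, and then unwind that inequality into $a<\bar a$ case by case on the sign of $x'$. The only difference is presentational: you spell out the asymptotics of $\eta(y)$ and the branch bookkeeping explicitly, whereas the paper relies on the asymptotic formula for $\xi(\eta(y))$ stated in the text immediately before the lemma.
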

\begin{proof}
If $x'>0$, hence, $\varphi_0<0$ and \eq{pol1/2p} is used, an equivalent condition is
$\Re e^{i\varphi_0}\left(ie^{-ia\pi/2}\right)^\al>0$ $\Leftrightarrow \varphi_0+(\pi/2-a\pi/2)\al\in (-\pi/2,\pi/2) \Leftrightarrow
2\varphi_0/(\pi\al)+1-a\in(-1/\al,1/\al)$ $\Leftrightarrow a\in (1-1/\al+2\varphi_0/(\pi\al), 1+1/\al+2\varphi_0/(\pi\al))$.
Hence, the necessary and sufficient condition on $a$ is
$a\in [1,\min\{3, 1+1/\al+2\varphi_0/(\pi\al)\})$. Similarly, if $x'<0$, hence, $\varphi_0>0$ and \eq{pol1/2m} is used, an equivalent condition
is
$a\in [1,\min\{3, 1+1/\al-2\varphi_0/(\pi\al)\})$.
\end{proof}
Set $\bar\varphi_0=\varphi_0-\mathrm{sign} x' (a-1)\al\pi/2$, and
$
c_\infty:=c_\infty(a):=|\Cp|\cos\bar\varphi_0.
$
We choose $a\in [1,\min\{3,\bar a\})$ equal to 1 or close to 1 so that $c_\infty:=c_\infty(a)>0$
is not too small.

\subsection{Choice of $\ze$ and $N_+$}\label{ss:pol_choices of ze and N_+} 
Choose $d\in (0,1)$, e.g., $d=0.8$, and,
if $x'<0$, set $\bar\varphi_0=\varphi_0+(a-1)\al\pi/2$;  if 
$x'>0$, set $\bar\varphi_0=\varphi_0+(1-a)\al\pi/2$. Set $c_\infty=|\Cp|\cos\bar\varphi_0$. 
Denote by $f(y)$ and $f_1(y)$ the integrands in \eq{Pp} and \eq{PF}, in the $y$-coordinate.
 As $\Re y\to+\infty$ and $y$
remains in the strip $S_{(-1,1)}$, $f(y)$ and $f_1(y)$  are
$O(e^{-c_\infty |y|^{ma\al}})$, where the constant in the $O$-term is 1 and $<1$, respectively. Hence,
the Hardy norm admits an approximate bound via 
\beqast
H&=&\frac{1}{\pi}\left(|f(-id)|+|f(id)|+2\int_0^{+\infty} e^{-c_\infty \rho^\al}d\rho\right)\\
&=& \frac{1}{\pi}\left(|f(-id)|+|f(id)|+2c_\infty^{-1/\al}\Gamma(1/\al+1)\right).
\eqast
We set $\ze=2\pi d/\ln(10H/\eps)$. 
The reader observes that if $\al>0$ is small, $H$ can be very large and $\ze$ very small; a similar integral appears
below, when we derive an approximation to $N_+$. This explains why it is advantageous to reduce
to the case $\al>1$ if the initial $\al$ is too small; but this reduction is possible only if $\al\in(1/3,1)$.

We find $\La_+$ as an approximate solution of 
\[
\int_{\La_1}^{+\infty}e^{-c_\infty \rho^\al}d\rho=\eps\pi,
\]
and set $\La_+=b^{-1/m}(\La_1)^{1/(am)}$, $\La_+=\mathrm{ceil}\, (\La_+/\ze)$.

\subsection{Asymptotics as $y\to-\infty$ and choice of $N_-$}\label{ss:asymp_choice of N_-}
As $ y\to -\infty$, 
\beqast
\eta(y)&=&0.5(y+\sqrt{1+y^2)}\\
&=& 0.5(y+(-y)(1+(-y)^{-2})^{1/2})\\
&=& \frac{1}{4(-y)}\left(1-\frac{1}{4(-y)^2}+\cdots\right),
\eqast
and as $\xi\to 0$,
\beqast
\left(e^{-ix'\xi}-1\right)e^{-\Cp\xi^\al}&=&-ix'\xi\left(1-\frac{ix'\xi}{2}-\Cp \xi^\al+\cdots\right)\\
\frac{e^{-ix'\xi}-1}{-i\xi}&=&x'\left(1-\frac{ix'\xi}{2}-\Cp \xi^\al+\cdots\right).
\eqast
\paragraph{If the change of variables \eq{pol1/2m} is made,} then, as $\eta\to 0$,
\beqast
\xi(\eta)&=&i(1-(1+ib\eta^m)^a)\\
&=& -i\left(iab\eta^m-\frac{a(a-1)}{2}b^2\eta^{2m}+\cdots\right)\\
&=& ab\eta^m\left(1+i\frac{(a-1)b}{2}\eta^{m}+\cdots\right),
\eqast
and, therefore, as $y\to-\infty$,
\beqast
\xi(\eta(y))&=&\frac{ab}{4^m(-y)^m}\left(1-\frac{m}{4(-y)^2}+\cdots\right)\left(1+\frac{i(a-1)b}{2}\left(\frac{1}{4^m(-y)^m}\right)^2+\cdots\right)\\
&=&\frac{ab}{4^m(-y)^m}\left(1-\frac{m}{4(-y)^2}+i\frac{(a-1)b}{2\cdot 4^m (-y)^{m}}+\cdots\right)
\eqast
and
\beqast
\frac{d\xi(\eta(y))}{dy}&=&\frac{mab}{4^m(-y)^{m+1}}
\left(1-\frac{m+2}{4(-y)^2}+i\frac{(a-1)b}{4^m(-y)^m}+\cdots\right).\eqast
Thus, in the $y$-coordinate, the integrand in \eq{Pp}, denote it $f(y)$,  
has the following asymptotics as $y\to-\infty$:
\beqast
f(y)&=&-i\frac{x'(ab)^2}{4^{2m}(-y)^{2m+1}}\left(1-\frac{2(m+1)}{4(-y)^2}
+ i\frac{b(-x'a+3(a-1)/2)}{4^m(-y)^m}-\Cp\frac{(ab)^\al}{4^{m\al}(-y)^{m\al}}+\cdots\right),
\eqast
and its real part has the asymptotics 
\beqast
\Re f(y)&=&C^0_f(-y)^{-3m-1}+C^\al_f(-y)^{-(2+\al)m-1}+O((-y)^{-\bar m-3}),
\eqast
where $\bar m=m(2+\min(1,\al))$, and
\beqast
C^0_f&=&\frac{x'm(ab)^2}{4^{3m}}b\left(-x'a+\frac{3(a-1)}{2}\right),\\
C^\al_f&=&-\frac{x'm(ab)^{2+\al}}{4^{m(2+\al)}}|\Cp|\sin\varphi_0.
\eqast
Therefore, in the case of pdf, if no leading term is separated, we define
\[
\La_-=\max\left\{\left(\frac{|C^0_f|}{\eps\pi 3m}\right)^{1/(3m)}, \left(\frac{|C^\al_f|}{\eps\pi m(2+\al)}\right)^{1/(m(2+\al))}\right\}
\]
set  $N_-=\mathrm{ceil}\,(\La_-/\ze)$, and calculate
\[
p(x')=p(0)+\frac{\ze}{\pi}\Re\sum_{j=-N_-}^{N_+}f(j\ze).
\]
Note that when calculating $f(y)$, we use 
\[
\Cp \xi^\al=\Cp e^{i\al\pi/2}(-1+(1-ib\eta^m)^a)^\al;
\]
with the choice $a=1, b=1$, $m=2$, the calculations are faster than with the other choices:
$\Cp \xi^\al=\Cp \eta^{2\al}$.

In the case of cpdf, the real part of the integrand in \eq{PF} in the $y$-coordinate, denote it $\Re f_1(y)$, has the asymptotics
\beqast
\Re f_1(y)&=&C^0_{f1}(-y)^{-m-1}+C^1_{f1}(-y)^{-m-3}+C^{1\al}_{f1}(-y)^{-2m-1}
+C^\al_{f1}(-y)^{-(\al+1)m-1}\\
&&+O((-y)^{-\tilde m-1}),
\eqast
where $\tilde m=\min\{2(m+1), m(1+\al)+2, m+4\}$, 
\beqast
C^0_{f1}&=&x'abm 4^{-m},\\
C^1_{f1}&=&-x'abm(m+2) 4^{-m-1},\\
C^\al_{f1}&=&-x'm(ab)^{1+\al}|\Cp|\cos\varphi_0 4^{-m(1+\al)},\\
C^{1\al}_{f1}&=&\frac{(x')^2a(a-1)b^2}{4^{2m}}.
\eqast 
Therefore, in the case of cpdf, if no leading term is separated, we define
\[
\La_-=\left(|x'|ab/(\eps\pi 4^m)\right)^{1/m},\ N_-=\mathrm{ceil}\,(\La_-/\ze).
\]

\paragraph{If the change of variables \eq{pol1/2p} is made,} then
\[
\xi(\eta)=ab\eta^m\left(1-i\frac{a(a-1)}{2}b^2\eta^{m}+\cdots\right),
\]
and the calculations above modify in a straightforward fashion (it suffices to replace $a-1$ with $1-a$ where $a-1$ appears).

\subsection{How to decrease $N_-$ using  Riemann zeta function.}
Typically, $N_->>N_+$.
The following  simple trick allows one to decrease $N_-$, in the case of the cpdf especially.
In the case of pdf,  for a chosen $m$, we precalculate $C^0_f$, $C^\al_f$,
$\ze_R(3m+1)$ and $\ze_R(m(2+\al)+1)$, where
$
\ze_R(s)=\sum_{j=1}^{+\infty} j^{-s}$  
is Riemann zeta function 
(we add the subscript $R$ to distinguish Riemann zeta function from
the mesh size $\ze$). Next, we define
\beqast
\La^0_-&=&\left(\frac{|C^0_f|(|x'|+1)}{4^m\eps\pi (3m+2)}\right)^{1/(3m+2)},\
\La^\al_1=\left(\frac{|C^\al_f|\al}{4^\al\eps\pi(m(2+\al)+2)}\right)^{1/((2+\al)m+2)},
\eqast
then set $\La_-=\max\{\La^0_-,\La^\al_-\}$, $N_-=\mathrm{ceil}\,(\La_- /\ze)$, 
 and calculate
\beqast
p(x')&=&p(0)+\frac{\ze}{\pi}\Re \sum_{j=-N_-}^{N_+}f(j\ze)+C^0_f\frac{\ze^{-3m}}{\pi}\left(\ze_R(3m+1)-\sum_{j=1}^{N_-}j^{-3m-1}\right)
\\
&& +C^\al_f\frac{\ze^{-(2+\al)m}}{\pi}
\cdot\left(\ze_R((2+\al)m+1)-\sum_{j=1}^{N_-}j^{-(2+\al)m-1}\right).
\eqast
In the case of cpdf, if only one leading term is separated, we precalculate $\ze_R(m+1)$ and $C^0_{f1}$, define
\beqast
\La^1_-&=&\left(\frac{|C^1_{f1}|}{\eps\pi(m+2)}\right)^{1/(m+2)},\ 
\La^{1\al}_-=\left(\frac{|C^{1\al}_{f1}|}{2\eps\pi m}\right)^{1/(2m)},\
\La^\al_-=\left(\frac{|C^\al_{f1}|}{\eps\pi(m(\al+1)+1)}\right)^{1/(m(\al+1))},
\eqast
$\La_-=\max\{\La^1_-,\La^\al_-,,\La^{1\al}_-\}$, $N_-=\mathrm{ceil}\,(\La_-/\ze),$
and calculate
\beqast
F(x')&=&0.5+\frac{\varphi_0}{\pi\al}+\frac{\ze}{\pi}\Re \sum_{j=-N_-}^{N_+}f_1(j\ze)
+C^0_{f1}\frac{\ze^{-m}}{\pi}\left(\ze_R(m+1)-\sum_{j=1}^{N_-}j^{-m-1}\right).
\eqast
If 4 first terms of the asymptotics of the integrand are taken into account, then $N_-$ decreases significantly.
We use an approximate prescription
\[
\La_-=\left((x')^2+|\Cp|^2)/(\eps\pi 4^{2m}\tilde m )\right)^{1/\tilde m},
\]
and the formula
\beqast
F(x')&=&0.5+\frac{\varphi_0}{\pi\al}+\frac{\ze}{\pi}\Re \sum_{j=-N_-}^{N_+}f_1(j\ze)
+C^0_{f1}\frac{\ze^{-m}}{\pi}\left(\ze_R(m+1)-\sum_{j=1}^{N_-}j^{-m-1}\right)\\
&& +C^1_{f1}\frac{\ze^{-m-2}}{\pi}\left(\ze_R(m+3)-\sum_{j=1}^{N_-}j^{-m-3}\right)
 +C^{1\al}_{f1}\frac{\ze^{-2m}}{\pi}\left(\ze_R(2m+1)-\sum_{j=1}^{N_-}j^{-2m-1}\right)\\
&& +C^{\al}_{f1}\frac{\ze^{-m(1+\al)}}{\pi}
\left(\ze_R(m(1+\al)+1)-\sum_{j=1}^{N_-}j^{-m(1+\al)-1}\right).
\eqast
\begin{rem}{\rm
Note that the use of 4 terms and more is efficient only
 if $x'$ is not large in absolute value, hence, in the cases $(++)$ and $(--)$
(implying $\al\in (0,1)$). It follows that, in cases $(+-)$ and $(-+)$, when $|x'|$ is very large, it may be advantageous to reduce the case
$\al\in (1,2]$ to the case $\al\in [0.5, 1)$. This can be done similarly to the reduction of the case $\al<1$ to the case $\al>1$, which we
consider below.}
\end{rem}

\subsection{Reduction of case $\al<1$ to case $\al>1$}\label{exoticalle1}
If $\al<1$ is small, then the integrands decay slowly, and it is advantageous to reduce the calculations to the case $\al>1$.



\paragraph{Case $(++)$, PDF.} In \eq{pdfSL1/2}, we change the variable $\Cp\xi^\al=-i\xi_1$, where $\xi_1$ runs over $\bR_+$
(for justification, we deform the line of integration to $-i\bR_+$), equivalently,\\ $|\Cp|e^{i\varphi_0}\xi^\al=e^{-i\pi/2}\xi_1$,
and, finally,
\bbe\label{xixi1}
\xi=|\Cp|^{-1/\al}e^{-i(\pi/2+\varphi_0)/\al}\xi_1^{1/\al}.
\ee
The deformation and change of variables can be justified if
\[
\Re(ix'(\xi(\xi_1)))=x'|\Cp|^{-1/\al}\xi_1^{1/\al}\Re e^{i(\pi/2-(\pi/2+\varphi_0)/\al)}
\]
tends to $+\infty$ as $\xi_1\to+\infty$, equivalently, $\pi/2>\pi/2-(\pi/2+\varphi_0)/\al>-\pi/2$. 
A sufficient condition (valid for arbitrary $\varphi_0\in [-\pi\al/2,0]$) 
is $\pi/2-\pi/(2\al)>-\pi/2\Leftrightarrow 1-1/\al>-1 \Leftrightarrow \al>1/2$. A necessary and sufficient condition
is $\varphi_0<\pi(\al-0.5)$; this condition implies that $\al>1/3$.

Assuming that $\varphi_0<\pi(\al-0.5)$, we calculate
\bbe\label{derxixi1}
\frac{d\xi}{d\xi_1}=\frac{1}{\al|\Cp|^{1/\al}}e^{-i(\pi/2+\varphi_0)/\al}\xi_1^{1/\al-1},
\ee
and, letting $\tilde\al=1/\al$, $\tilde\phi_0=\pi/2-(\pi/2+\varphi_0)/\al$, $\tilde C_+=x'|\Cp|^{-\tilde\al}e^{i\tilde\varphi_0}$,
obtain
\[
p(x')=\frac{\tilde\al}{\pi |\Cp|^{\tilde\al}}\Re \left(e^{-i(\pi/2+\varphi_0)\tilde\al}\int_{\bR_+}
e^{i\xi_1-\tilde C_+\xi_1^{\tilde \al}}\xi_1^{\tilde \al-1}d\xi_1\right).
\]
If $|\tilde C_+|$ is small, then it is advantageous to calculate the integral directly 
using the changes of variables for $x':=-1$: $\xi_1=i(-1+(1-ib\eta^m)^a)$ with $a>1$ s.t. 
$(\pi/2)(1-a)\tilde\al+\tilde\varphi_0\in (-\pi/2,\pi/2)$, and
\eq{yPchange}. To derive an approximate bound for the 
Hardy norm and the truncation parameter $\La_+$, we use $c_\infty=-\cos(a\pi/2)$. The asymptotics
of the integrand as $\xi_1\to 0$ is
\[
\xi_1^{\tilde \al-1}e^{-\tilde C_+\xi_1^{\tilde\al}+i\xi_1}=\xi_1^{\tilde \al-1}-(\tilde C_+)\xi_1^{2\tilde\al-1}+i\xi_1^{\tilde\al}+\cdots
\]
Deriving the asymptotics for $\xi_1(\eta(y))$ as $y\to-\infty$, and substituting into the above formula, we
can obtain the leading terms for the truncated part of the infinite sum in the neighborhood of $y=-\infty$ and
significantly decrease $N_-$. The details are left to the reader.

If $|\tilde C_+|$ is not small, then it may be advantageous to make the following reduction. 
Since
\[
\Re  \left(e^{-i(\pi/2+\varphi_0)\tilde\al}\int_{\bR_+}
e^{-\tilde C_+\xi_1^{\tilde \al}}\xi_1^{\tilde \al-1}d\xi_1\right)
=0,
\]
we have
\beqast
p(x')&=&\frac{\tilde\al}{\pi|\Cp|^{\tilde\al}}\tilde p(-1),
\eqast
where
\beqast
\tilde p(-1)&=&\Re \left(e^{-i(\pi/2+\varphi_0)\tilde\al}\int_{\bR_+}
\left(e^{i\xi}-1\right)e^{-\tilde C_+\xi^{\tilde \al}}\xi^{\tilde \al-1}d\xi\right).
\eqast

\paragraph{Case $(--)$, PDF.} We change the variable $\Cp\xi^\al=i\xi_1$, where $\xi_1$ runs over $\bR_+$
(for justification, we deform the line of integration to $i\bR_+$), equivalently, $|\Cp|e^{i\varphi_0}\xi^\al=e^{i\pi/2}\xi_1$,
and, finally,
\bbe\label{xixi1m}
\xi=|\Cp|^{-1/\al}e^{i(\pi/2-\varphi_0)/\al}\xi_1^{1/\al}.
\ee
The deformation and change of variables can be justified if
\[
\Re(ix'(\xi(\xi_1)))=-x'|\Cp|^{-1/\al}\xi_1^{1/\al}\Re e^{i(-\pi/2+(\pi/2-\varphi_0)/\al)}
\]
tends to $+\infty$ as $\xi_1\to+\infty$, equivalently, $-\pi/2<-\pi/2+(\pi/2-\varphi_0)/\al<\pi/2$. Since
$\varphi_0\in [0,\pi\al/2]$, a necessary and sufficient condition is $-\varphi_0<\pi(\al-0.5)$; this condition implies that $\al>1/3$.

Assuming that $-\varphi_0<\pi(\al-0.5)$, we calculate
\bbe\label{derxixi1m}
\frac{d\xi}{d\xi_1}=\frac{1}{\al|\Cp|^{1/\al}}e^{i(\pi/2-\varphi_0)/\al}\xi_1^{1/\al-1},
\ee
and, letting $\tilde\al=1/\al$, $\tilde\phi_0=-\pi/2+(\pi/2-\varphi_0)/\al$, 
$\tilde C_+=-x'|\Cp|^{-\tilde\al}e^{i\tilde\varphi_0}$,
obtain
\[
p(x')=\frac{\tilde\al}{\pi |\Cp|^{\tilde\al}}\Re \left(e^{i(\pi/2-\varphi_0)\tilde\al}\int_{\bR_+}
e^{-i\xi_1-\tilde C_+\xi_1^{\tilde \al}}\xi_1^{\tilde \al-1}d\xi_1\right).
\]
If $|\tilde C_+|$ is small, then it is advantageous to calculate the integral directly 
using the changes of variables for $x':=1$: $\xi_1=i(1-(1+ib\eta^m)^a)$ with $a>1$ s.t. 
$(\pi/2)(a-1)\tilde\al+\tilde\varphi_0\in (-\pi/2,\pi/2)$, and
\eq{yPchange}. 

If $|\tilde C_+|$ is not small, then it may be advantageous to make the following reduction. 
Since
\[
\Re  \left(e^{i(\pi/2-\varphi_0)\tilde\al}\int_{\bR_+}
e^{-\tilde C_+\xi_1^{\tilde \al}}\xi_1^{\tilde \al-1}d\xi_1\right)=0,
\]
we have
\beqast
p(x')&=&\frac{\tilde\al}{\pi|\Cp|^{\tilde\al}}\tilde p(1),
\eqast
where
\beqast
\tilde p(1)&=&\Re \left(e^{i(\pi/2-\varphi_0)\tilde\al}\int_{\bR_+}
\left(e^{-i\xi_1}-1\right)e^{-\tilde C_+\xi^{\tilde \al}}\xi^{\tilde \al-1}d\xi\right).
\eqast
We can calculate $\tilde p(\pm1)$ using the same changes of variables as in the case $\al>1$, $x'=\pm 1$,
with $\tilde\al$ in place of $\al$.
The choice of parameters modifies in the trivial manner; instead of $\eps$, 
$\tilde\eps=\eps\al|\Cp|^{\tilde \al}$ must be used. 

\paragraph{Case $(++)$ , CPDF.} In the integral on the RHS of \eq{PF}, we make the change of variables \eq{xixi1} and use \eq{derxixi1} and the same
notation $\tilde\al$, $\tilde\varphi_0$, $\tilde C_+$ as in Case (++) for pdf:
\[
\int_{\bR_+}\frac{e^{-ix'\xi}-1}{-i\xi}e^{-\Cp \xi^\al}d\xi=\tilde\al\int_{\bR_+}\frac{e^{-\tilde C_+\xi_1^{\tilde\al}}-1}{-i\xi_1}e^{i\xi_1}d\xi_1.
\]
We calculate the integral using the changes of variables for $x':=-1$: $\xi_1=i(-1+(1-ib\eta^m)^a)$ and
\eq{yPchange}. To derive an approximate bound for the 
Hardy norm and the truncation parameter $\La_+$, we use $c_\infty=-\cos(a\pi/2)$. The asymptotics
of the integrand as $\xi_1\to 0$ is
\[
\frac{e^{-\tilde C_+\xi_1^{\tilde\al}}-1}{-i\xi_1}e^{i\xi_1}=(\tilde C_+/i)\xi_1^{\tilde\al-1}+\tilde C_+\xi_1^{\tilde\al}+\cdots
\]
Deriving the asymptotics for $\xi_1(\eta(y))$ as $y\to-\infty$, and substituting into the above formula, we
can obtain the leading terms for the truncated part of the infinite sum in the neighborhood of $y=-\infty$, and
significantly decrease $N_-$. The details are left to the reader.

\paragraph{Case $(--)$, CPDF.} We make the change of variables \eq{xixi1m}-\eq{derxixi1m} and use the same
notation $\tilde\al$, $\tilde\varphi_0$, $\tilde C_+$ as in Case $(-)$ for pdf. 
We calculate the integral using the changes of variables for $x':=1$: $\xi_1=i(1-(1+ib\eta^m)^a)$ and
\eq{yPchange}. After that, we continue similarly to the case $(++)$.
 
 \section{Sub-polynomial acceleration}\label{s:SubP}
 
  \subsection{One-sided stable L\'evy distributions}\label{hyperOne-sided_subP}
  If $x'$ is large in absolute value, then the sinh-acceleration and polynomial acceleration can be inefficient
 due to the presence of the factor $e^{\om_1x'}$. In order that this factor be neither
 extremely small nor extremely large, $\om_1$ must be very small in absolute value.
 But then the strip of analyticity that can be used to derive the error bound for the Hardy norm is very narrow,
 hence, the mesh size is very small and the number of terms in the simplified trapezoid rule is very large.
 To tackle this difficulty, we use the change of variables of the form
 $
 \xi=y \ln^m(a^2+y^2)$, where $a>0$ and $m\ge 1$, and integrate along an appropriate line
 $\{\Im y=\om\}$. The idea is as follows. As $y\to\pm\infty$, the leading term of the asymptotics of the real part of 
 \beqast
 &&-ix'(y+i\om)\ln^m(a^2+(y+i\om)^2)\\
 &=& -ix'(y+i\om)(2\ln (y+i\om)+\ln(1+2i\om/y+O(y^{-2}))^m\\
 &=&(2\ln y)^m(-x')(y+i\om)\left(1+\frac{i\om}{y\ln y}+O(y^{-2})\right)^m\\
 &=& (2\ln y)^m(-ix' y+x'\om(1+m/\ln y)+O(y^{-1}))
\eqast
 is $x'\om (2\ln y)^m$. Hence, if we choose the strip of analyticity $S_{(\om-d, \om+d)}$ of the integrand
in the $y$-coordinate so that  $x'(\om\pm d)<0$, then the integrand is uniformly bounded by a small or moderately large
constant and decays fast as $y\to+\infty$ in the strip.
The logarithm of the integrand tends to $-\infty$ as $\exp[-\psi^0_{st}(|y|\ln^m|y|)]$, hence, much faster than  the
integrand in the $\xi$-coordinate, and the number of terms in the simplified trapezoid rule decreases significantly.
In the context of KoBoL and other L\'evy processes with exponentially decaying tails, similar changes of variables
were used in \cite{iFT}. The change is useful when the simplified conic trapezoid rule can be applied with a very narrow
cone only, and the strip of analyticity in the $y$-coordinate $\xi=e^{\i\om+y}$ is too narrow, 
hence, $\ze$ too small and the number of terms
 too large.

\subsection{General stable L\'evy distributions of index $\al=1$}\label{exotical1}
We have
\beqa\label{P1}
p(x')&=&\frac{\sg}{\sg^2+(x')^2}+\frac{1}{\pi}\Re\int_0^{+\infty} f_0(\xi)d\xi,\\\label{F1}
F(x')&=&\frac{1}{2}+\frac{\arctan(x')/\sg}{\pi}-\frac{1}{\pi}\Im\int_0^{+\infty}\frac{f_0(\xi)}{\xi}d\xi,
\eqa
where 
$f_0(\xi)=e^{-(ix'+\sg)\xi}\left(e^{-i\frac{2\sg\be}{\pi}\xi\ln\xi}-1\right).
$
In  integrals \eq{P1}-\eq{F1}, we change the variables
\[
\xi=\eta\ln^m\left(1+\eta^2/(2d_0)^2\right),\ \eta=0.5(y+(y^2+(2d_0)^2)^{1/2},
\]
where $d_0\ge 1$, $m\ge 1$. The derivatives are
\beqast
\frac{d\eta}{dy}&=&0.5(1+y(y^2+(2d_0)^2)^{-1/2})\\
\frac{d\xi}{d\eta}&=&\ln^{m-1}\left(1+\frac{\eta^2}{(2d_0)^2}\right)\left[\ln\left(1+\frac{\eta^2}{(2d_0)^2}\right)+\frac{2m\eta^2}{(2d_0)^2+\eta^2}\right],
\eqast
and the simplified trapezoid rule for pdf is of the form
\beqast\label{simplpdf}
\frac{1}{\pi}\Re\int_0^{+\infty} f_0(\xi)d\xi&=&\frac{\ze}{\pi}\Re\sum_{-N_-}^{N_+} f_0(\xi(\eta(y_j))\frac{d\xi}{dy}(y_j),\\
\eqast
where $\frac{d\xi}{dy}(y_j)=\frac{d\xi}{d\eta}(\eta(y_j))\frac{d\eta}{dy}(y_j)$, $y_j=i\om+j\ze$.
Clearly, the rate of decay of the integrand increases with $m$ and the strip of analyticity
of the integrand widens as $d_0$ increases.  Hence, large $m$ and $d_0$ decrease the number
of terms in the simplified trapezoid rule (given the error tolerance). However, approximate bounds
for the Hardy norm, hence, for the discretization error, and bounds for the truncation errors, that we derive,
become too inaccurate for large $m$ and $d_0$. In numerical experiments, we observed
that the choice of $m\ge 2.5$ is unsafe for relatively large $|x'|$, and the choice of $d_0\ge 2.5$
can also be unsafe. Typically, the choice $m=2$ and $d=2$ and approximate recommendations that we derive are safe.

\paragraph{Choice of the strip of analyticity and $\ze$}
\begin{lem}\label{lem:exoticAl1_choice_strip}
Functions $y\mapsto \eta(y)$ and $y\mapsto \xi(\eta(y))$ are analytic in the strip $S_{(-2d_0,2d_0)}$.
\end{lem}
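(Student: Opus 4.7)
The plan is to verify analyticity in two stages. I would first handle $y\mapsto\eta(y)=0.5(y+\sqrt{y^2+(2d_0)^2})$, whose only obstruction is the branch of the square root. Using the principal branch, cut along $(-\infty,0]$, I need $y^2+(2d_0)^2\notin(-\infty,0]$ for all $y=u+iv$ with $|v|<2d_0$. Since $\Im(y^2+(2d_0)^2)=2uv$, this imaginary part vanishes only when $u=0$ or $v=0$; in the former case the real part equals $(2d_0)^2-v^2>0$, in the latter $u^2+(2d_0)^2>0$. Hence the square root is single-valued and analytic on the open strip, and so is $\eta$.

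Second, for $\xi(\eta(y))=\eta(y)\ln^m(1+\eta(y)^2/(2d_0)^2)$, analyticity requires $1+\eta(y)^2/(2d_0)^2\notin(-\infty,0]$, equivalently $\eta(y)^2$ is not a real number $\le-(2d_0)^2$. The workhorse for this is the algebraic identity one obtains by squaring $2\eta-y=\sqrt{y^2+(2d_0)^2}$, namely
\[
\eta^2-\eta y=d_0^2.
\]
One first notes that $\eta(y)\ne 0$ in the strip: $\eta=0$ would force $y=-\sqrt{y^2+(2d_0)^2}$, and squaring gives $(2d_0)^2=0$, a contradiction.

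Now suppose $\eta(y)^2$ were real and $\le 0$. Writing $\eta=a+bi$, the reality of $\eta^2=a^2-b^2+2abi$ forces $a=0$ or $b=0$; since $\eta\ne 0$ and $\eta^2\le 0$, we must have $a=0$, so $\eta=it$ with $t\in\bR\setminus\{0\}$. Plugging into $\eta^2-\eta y=d_0^2$ gives $-t^2-ity=d_0^2$, and separating real and imaginary parts forces $y\in i\bR$. Writing $y=is$ yields $t(s-t)=d_0^2$, so the two real factors $t$ and $s-t$ have positive product, and by AM-GM $s^2=(t+(s-t))^2\ge 4t(s-t)=4d_0^2$, i.e.\ $|\Im y|\ge 2d_0$. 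This is incompatible with $y\in S_{(-2d_0,2d_0)}$. Therefore $\eta(y)^2\notin(-\infty,0]$ on the open strip, in particular $\eta(y)^2\notin(-\infty,-(2d_0)^2]$, so $\ln(1+\eta(y)^2/(2d_0)^2)$ is analytic and $\xi\circ\eta$ is analytic as claimed.

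The only nontrivial step is ruling out $\eta(y)\in i\bR$; this is exactly where the identity $\eta^2-\eta y=d_0^2$ does all the work. Everything else is a direct check of branch-cut avoidance.
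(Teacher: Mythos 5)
Your proof is correct, but it follows a genuinely different route from the paper's. The paper rescales to $2d_0=1$, declares the analyticity of $\eta$ trivial, and then handles $\xi\circ\eta$ by reducing to $y$ in the right half-plane (implicitly via the symmetry $\eta(y)\eta(-y)=d_0^2$) and tracking arguments: $\mathrm{arg}(y+\sqrt{1+y^2})\in(-\pi/2,\pi/2)$ there, hence $\mathrm{arg}(\eta^2)\in(-\pi,\pi)$ and the logarithm's cut is avoided. You instead verify the analyticity of $\eta$ explicitly (checking $y^2+(2d_0)^2\notin(-\infty,0]$ on the strip, which the paper omits), and replace the argument-tracking by the algebraic identity $\eta^2-\eta y=d_0^2$, from which purely imaginary $\eta=it$ forces $y=is$ with $t(s-t)=d_0^2$ and hence $|\Im y|\ge 2d_0$ by AM--GM. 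Both arguments actually establish the stronger fact $\eta(y)^2\notin(-\infty,0]$ (which is also what one needs if $m$ is not an integer, so that $\ln(1+\eta^2/(2d_0)^2)$ itself stays off $(-\infty,0]$). What your version buys is that it is fully self-contained --- it does not rely on the unjustified "it suffices to consider the right half-plane" step --- and it exhibits the exact mechanism by which analyticity fails at $|\Im y|=2d_0$, showing the strip in the lemma is sharp; the paper's version is shorter and geometrically more transparent once the half-plane reduction is accepted.
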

\begin{proof} Rescaling $y\mapsto 2d_0$ reduces to the case $2d_0=1$. 
The analyticity of $\eta(y)$ is trivial, and the  analyticity  of $\xi(\eta(y))$
follows from
\[
1+0.25(y+(1+y^2)^{1/2}))^2\not\in (-\infty,0],\ \Im y\in (-1,1).
\]
For the  proof, it suffices to consider $y$ in the right half-plane. But then $\mathrm{arg} (y+(1+y^2)^{1/2})\in (-\pi/2, \pi/2)$,
and $\mathrm{arg} (y+(1+y^2)^{1/2})^2\in (-\pi, \pi)$.
\end{proof}
\begin{lem}\label{hyperosc1}
Let $x'<0$ (resp., $x'>0$). Then, for any $\om\in (0, 2d_0)$ (resp., $\om\in (-2d_0, 0)$),
$\Re (-ix'\xi(y+i\om))\to -\infty$ as $y\to+\infty$.
\end{lem}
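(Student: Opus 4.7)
The plan is to extract the leading asymptotics of $\xi(\eta(y+i\om))$ as $y\to+\infty$ with $\om$ fixed in the stated interval, and then use
\[
\Re(-ix'\xi(\eta(y+i\om))) = x'\,\Im\,\xi(\eta(y+i\om))
\]
(which holds since $x'$ is real) to reduce the claim to showing $\Im\,\xi(\eta(y+i\om))\sim\om(2\ln y)^m$. Once this is in hand, the sign conditions on $x'$ and $\om$ in each of the two cases listed in the statement force $x'\om<0$, so $x'\cdot\om(2\ln y)^m\to-\infty$ as $y\to+\infty$.

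I would carry out the asymptotics in three stages. First, Taylor-expand the square root in $\eta(y+i\om)=0.5\bigl((y+i\om)+\sqrt{(y+i\om)^2+(2d_0)^2}\bigr)$ to obtain $\eta(y+i\om)=y+i\om+O(y^{-1})$ as $y\to+\infty$. Second, using $\ln(1+\eta^2/(2d_0)^2)=2\ln\eta-2\ln(2d_0)+O(\eta^{-2})$ together with $\ln(y+i\om+O(y^{-1}))=\ln y+i\om/y+O(y^{-2})$, write
\[
L:=\ln\bigl(1+\eta(y+i\om)^2/(2d_0)^2\bigr)=L_R+iL_I,
\]
with $L_R=2\ln y-2\ln(2d_0)+O(y^{-2})$ and $L_I=2\om/y+O(y^{-3})$. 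Third, since $|L_I/L_R|=O(1/(y\ln y))\to 0$, expand
\[
L^m=L_R^m+imL_R^{m-1}L_I+O(L_R^{m-2}/y^2),
\]
and multiply by $\eta=y+i\om+O(y^{-1})$ to obtain
\[
\Im\,\xi=\om L_R^m+y\cdot mL_R^{m-1}L_I+o(L_R^{m-1})=\om L_R^m+2m\om L_R^{m-1}+o(L_R^{m-1})\sim\om(2\ln y)^m.
\]

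The only technical point is the bookkeeping of the error terms. The correction $O(L_R^{m-2}/y^2)$ in $L^m$ becomes $O(L_R^{m-2}/y)$ after multiplication by $y=\Re\eta$, which is $o(L_R^{m-1})$; the $O(y^{-1})$ error in $\eta$ contributes $O(L_R^m/y)$, also $o(L_R^{m-1})$. One must also verify that the binomial expansion $L^m=L_R^m(1+iL_I/L_R)^m$ is legitimate for the relevant (possibly non-integer) $m\ge 1$, which it is precisely because $L_I/L_R\to 0$ and $L_R>0$ eventually. No step is genuinely hard; the whole argument is a careful chain of leading-order expansions, and the key observation that makes it work is that the product $y L_I$ stabilizes at $2\om$, producing the finite shift $2m\om L_R^{m-1}$ but leaving the dominant $\om L_R^m$ term intact.
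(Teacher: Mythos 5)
Your proposal is correct and follows essentially the same route as the paper: expand $\eta(y+i\om)=y+i\om+O(y^{-1})$, expand the logarithm to get $\ln(1+\eta^2/(2d_0)^2)=2\ln y-2\ln(2d_0)+2i\om/y+O(y^{-2})$, and extract the leading imaginary part $\om(2\ln y)^m$ of $\xi$, so that $\Re(-ix'\xi)=x'\Im\xi\sim x'\om(2\ln y)^m\to-\infty$ because the sign hypotheses force $x'\om<0$. Your version merely spells out the error bookkeeping (and the reduction $\Re(-ix'\xi)=x'\Im\xi$) more explicitly than the paper does.
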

\begin{proof} We have
\beqast
\eta(y+i\om)&=&\frac{1}{2}\left(y+i\om+(y+i\om)\left(1+\frac{(2d_0)^2}{(y+i\om)^2}\right)^{1/2}\right)\\
&=&y+i\om+2d_0^2(y+i\om)^{-1}+O(y^{-3}),
\eqast
therefore, 
\beqast
\xi(\eta(y+i\om))&=&(y+i\om+O(y^{-1}))\left[\ln((y+i\om)^2)+O(y^{-2})\right]^m\\
&\sim &y(2\ln y)^m(1-\ln (2d_0)/\ln y+i\om(1+m/\ln y)/y)
\eqast
and 
$
\Re(-ix'\xi)\sim (2\ln y)^mx'\om(1+m/\ln y).
$
\end{proof}
\begin{lem}\label{hyperosc2}  For any $\om\in (-2d_0, 2d_0)$, as $y\to+\infty$,
\beqast
&&\Re \left(-\sg\xi(\eta(y+i\om))\left(1+\frac{2\be}{\pi}\ln \xi(\eta(y+i\om))\right)\right)
\sim -\sg y(2\ln y)^m\to -\infty.
\eqast
\end{lem}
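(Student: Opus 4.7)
The plan is to refine the asymptotic expansion of $\xi(\eta(y+i\omega))$ obtained in the proof of Lemma \ref{hyperosc1}, keeping sufficient control on both the real and imaginary parts, and then expand the logarithm. From $\eta(y+i\omega) = (y+i\omega) + 2d_0^2(y+i\omega)^{-1} + O(y^{-3})$ and $\xi(\eta) = \eta\ln^m(1+\eta^2/(2d_0)^2)$, I would write
\[
\xi(\eta(y+i\omega)) = (y+i\omega)(2\ln y)^m\bigl(1+O(1/\ln y)+O(1/y)\bigr),
\]
so that $\Re\xi \sim y(2\ln y)^m$ while $\Im\xi = O((2\ln y)^m)$. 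Since the modulus of $\xi$ is dominated by its real part, I would then compute
\[
\ln\xi(\eta(y+i\omega)) = \ln y + m\ln(2\ln y) + O(1/\ln y) + i\cdot O(1/y),
\]
so that $\Re\ln\xi \sim \ln y$ and $\Im\ln\xi = O(1/y)$.

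Next, I would substitute these expansions into the expression in the lemma, interpreting the factor $1+(2\beta/\pi)\ln\xi$ consistently with the analytic continuation of $\psi^0_{st}$ for index one (where the logarithmic term carries the implicit factor $i$, cf.\ \eqref{SLnu1}). Writing $\xi = A+iB$ and the relevant logarithmic term as $L+iM$, the real part of the product expands as $A - (2\beta/\pi)(AM+BL)$. The dominant term $-\sigma A$ gives exactly $-\sigma y(2\ln y)^m(1+o(1))$. The two cross terms are $AM = y(2\ln y)^m\cdot O(1/y) = O((2\ln y)^m)$ and $BL = O((2\ln y)^m)\cdot \ln y = O((2\ln y)^m\ln y)$; both are $o(y(2\ln y)^m)$, and hence negligible relative to the leading term.

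The main technical obstacle is ensuring that $\Im\xi$ stays genuinely at order $(2\ln y)^m$ rather than growing like $y(2\ln y)^m$, since otherwise the cross term $BL$ could compete with or dominate the leading asymptotic. This is guaranteed because $\omega$ enters the expansion of $\eta(y+i\omega)$ only additively (not multiplicatively with $y$), so after applying $\ln^m$ the $\omega$-contribution produces a pure $i\omega(2\ln y)^m$ imaginary part, confirming the stated asymptotic.
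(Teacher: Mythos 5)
Your proposal is correct and follows essentially the same route as the paper: both expand $\xi(\eta(y+i\om))$ and $\ln\xi(\eta(y+i\om))$ using the asymptotics from Lemma \ref{hyperosc1}, identify the leading contribution $-\sg\Re\xi\sim-\sg y(2\ln y)^m$, and check that the cross term coming from the (implicitly $i$-multiplied) logarithmic factor is only $O((\ln y)^{m+1})$, hence negligible. Your explicit bookkeeping of $\Im\xi=O((2\ln y)^m)$ and $\Im\ln\xi=O(1/y)$ is precisely the content of the paper's one-line estimate $\Re(-i(2\sg\be/\pi)\xi\ln\xi)\sim 2\om\sg\be(2\ln y)^{m+1}/\pi$.
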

\begin{proof} We have $-\xi\sim y(2\ln y)^m$, and 
\[
\Re(-i(2\sg\be/\pi)\xi\ln\xi)\sim (2\ln y)^{m+1}2\om \sg\be/\pi.
\]
\end{proof}
We set $\om=-d_0\mathrm{sign}x'$, choose $k_d\in (0,1)$, set $d=k_dd_0$,
and use the line of integration $\{\Im\xi=\om\}$ and the strip $S_{(\om-d,\om+d)}$ to
derive the error bound for the infinite trapezoid rule and recommendation for the choice
of $\ze$. In the case of pdf (resp., cpdf), the integrand is
\[
f(y)=f_0(\xi(\eta(y))\frac{d\xi}{d\eta}(\eta(y))\frac{d\eta}{dy}(y)
\]
(resp., $f_1(y):=f(y)/\xi(\eta(y))$). The analysis of the proofs of Lemmas \ref{hyperosc1}-\ref{hyperosc2}
 shows that both 
 functions are uniformly bounded by a small or moderately large constant and uniformly decay
 as $\Re y\to+\infty$ and $y$ remains in the strip.

As a simple bound for the Hardy norm, we use
$H=|f(i(\om-d)|+|f(i(\om+d)|+1/\sg$ in the case of pdf; in the case of cpdf, we replace $f$ with $f_1$. 
Then we set $\ze=2\pi d/\ln(10H/\eps)$, where $\eps$ is the error tolerance. The bound can be easily improved,
and larger $\ze$ used.

\paragraph{Choice of $\La_+$ and $N_+$.}
As $\Re y\to +\infty$, $(d\eta/dy)(y+i\om)\sim 1$, and $(d\xi/dy)(y+i\om)\sim (2\ln y)^m$. Hence, 
we start with a simple approximate equation 
\[
\frac{1}{\pi}\int_{\La_+}^{+\infty} e^{-\sg y(2\ln y)^m-|x'\om|(2\ln y)^m}(2\ln y)^m dy=\eps,\]
which we replace with a simpler one
\[
\frac{1}{\sg\pi}\int_{\La_+}^{+\infty} e^{-\sg y(2\ln y)^m}d(\sg y(2\ln y)^m)=\eps.\]
Let  $Y_+=\ln\La_+$. We have the equation
\[
e^{Y_+}Y_+^m=-\ln(\eps\sg\pi)/(\sg 2^m),
\]
which can be solved using the Newton method. When $Y_+$ is calculated with a moderate precision,
we set $\La_+=\exp Y_+$, $N_+=\mathrm{ceil}\, (\La_+/\ze)$. Note that
this prescription may lead to a serious overkill, in the case of cpdf  and large $|x'|$ especially.
\paragraph{Choice of $\La_-$ and $N_-$}
We calculate the asymptotics of $\eta(y+i\om)$, $\xi(\eta(y+i\om))$, their derivatives and 
$f(y+i\om)$ as $y\to-\infty$:
\beqast
\eta(y+i\om)&=&\frac{1}{2}(y+i\om+(-y-i\om)\left(1+\frac{(2d_0)^2}{(-y-i\om)^2}\right)^{1/2})\\
&=&d_0^2(-y)^{-1}(1+i\om/(-y)+O(y^{-2})),\\
\xi(\eta)&=&\eta(\eta^2/(2d_0)^2+O(\eta^4))^m\\
&=& \eta^{2m+1}(2d_0)^{-2m}(1+O(\eta^2)),
\eqast
then
\beqast\nonumber
\xi(\eta(y+i\om))&=&\left(\frac{d_0^2}{-y}\right)^{2m+1}\frac{1}{(2d_0)^{2m}}
\left(1+\frac{i\om(2m+1)}{-y}+O(y^{-2})\right)\\\nonumber
&=&\frac{d_0^{2(m+1)}}{(-y)^{2m+1}}\left(1+\frac{i\om(2m+1)}{-y}+O\left(\frac{1}{(-y)^{2}}\right)\right),\\
\ln(\xi(\eta(y+i\om)))
&=&-(2m+1)\ln(-y)+\frac{i\om(2m+1)}{-y}+g_0(y)+ig_{-2}(y),
\eqast
where $g_0(y)=O(1)$ and $g_{-2}(y)=O(y^{-2})$ are real-valued functions.
We conclude that 
\beqast
-i(\xi\ln\xi)(\eta(y+i\om))
&=&(2m+1)\frac{d_0^{2(m+1)}}{(-y)^{2m+1}}\ln(-y)\left[i-\frac{\om(2m+1)}{-y}+g(y)\right],
\eqast
where $g(y)=ig_{-1}(y)+g_{-2}(y)$, and $g_{-1}=O(y^{-1})$, $g_{-2}(y)=O(y^{-2})$ are real-valued functions,
and 
\beqast
\frac{d\xi}{dy}(y+i\om)
&=&d_0^{2(m+1)}\left[\frac{2m+1}{(-y)^{2(m+1)}}+\frac{i\om(2m+1)(2m+2)}{(-y)^{2m+3}}+\cdots\right].
\eqast
Finally, 
\beqast
f(y)&=&e^{-(ix'+\sg)\xi}\left(e^{-i\frac{2\sg\be}{\pi}\xi\ln\xi}-1\right)\frac{d\xi}{dy}(y+i\om)\\
&=&\frac{2\sg\be}{\pi}d_0^{4(m+1)}(2m+1)^2(-y)^{-4m-3}\ln(-y)
\left[i-\frac{\om(4m+3)}{-y}+\cdots\right],
\eqast
and, therefore,
\bbe\label{fas}
\Re f(y)=C_f(-y)^{-4(m+1)}\ln(-y)(1+O(y^{-1})),
\ee
where
\[
C_f=-\om\frac{2\sg\be}{\pi}d_0^{4(m+1)}(2m+1)^2(4m+3).
\]
Using the asymptotic formula \eq{fas}, we find the truncation
parameter $\La_-$ in the simplified trapezoid rule for pdf as a solution of
the inequality
\[
\int_{\La_-}^{+\infty} y^{-4m-4}\ln y dy\le \eps_1,
\]
where 
\[
\eps_1=\frac{\eps\pi}{2\sg\be\om}d_0^{-4(m+1)}(2m+1)^{-2}(4m+3)^{-1}.
\]
Integrating by parts, we see that, as an approximation, we may use
the solution of the equation
\[
\La_-e^{-(4m+3)\La_-}=\eps_1(4m+3),
\]
which is equivalent to
\[
\La_--\ln\La_-/(4m+3)+\ln(\eps_1(4m+3))/(4m+3)=0.
\]
An approximate solution can be easily found using the Newton method. 
In the case of the cpdf, we similarly derive
\bbe\label{f1as}
\Im\frac{f(y)}{\xi(\eta(y+i\om))}\sim C_{f1}(-y)^{-2(m+1)}\ln(-y),
\ee
where $C_{f1}=(2\sg\be/\pi)d_0^{2(m+1)}(2m+1)^2$, and find $\La_-$ as
an approximate solution of 
\[
\La_--\ln\La_-/(2m)+\ln(2m\eps_1)/(2m)=0,
\]
where 
$
\eps_1=\eps\pi(2\sg\be\om)^{-1}d_0^{-2(m+1)}(2m+1)^{-2}.
$
Finally, we set
$N_-=\mathrm{ceil}\,(\La_-/\ze)$.

\paragraph{How to decrease $N_-$ using \eq{fas}, \eq{f1as}  and Riemann zeta function.}
Typically, $N_->>N_+$.
The following  simple trick allows one to decrease $N_-$, in the case of the cpdf especially. 
For $m$ that is used in our method for cpdf, we precalculate $\ze_R(2(m+1))$ and $\ze'_R(2(m+1))$, where
$
\ze_R(s)=\sum_{j=1}^{+\infty} j^{-s}$ and $
\ze'_R(s)=-\sum_{j=1}^{+\infty} j^{-s}\ln j$
are Riemann zeta function and its derivative.
Due to \eq{f1as}, the rate of decay of
\[
f_2(y_j):=f_1(y_j)-C_{f1}\ze^{-2(m+1)}(-j)^{-2(m+1)}\ln(-j\ze)\]
as $j\to-\infty$ is larger than the rate of decay of $f_1(y_j)$. Hence, we may use an approximate
equation 
\[
\La_--\ln\La_-/(2m+1)+\ln(\eps_1(2m+1))/(2m+1)=0
\]
to define $\La_-$ and then $N_-$, and the formula for the cpdf becomes
\beqa\label{cpdfAl1Riemann}
F(x')&=&\frac{1}{2}+\frac{\arctan(x'/\sg)}{\pi}-\frac{\ze}{\pi}\Im\sum_{j=-N_-}^{N_+} f_1(y_j)
-\frac{ C_{f1}}{\ze^{2m+1}\pi}(-\ze'_R(2(m+1))\\\nonumber
&&+\ze_R(2(m+1))\ln\ze-\sum_{j=1}^{N_-}j^{-2(m+1)}\ln(j\ze))
\eqa
\begin{rem} {\rm One can decrease $N_-$ further still deriving several terms of the asymptotics
of $f(y)$ and $f_1(y)$ as $y\to-\infty$ and generalizations of \eq{cpdfAl1Riemann}
with several correction terms expressible in terms of Riemann zeta function and its derivatives.}
\end{rem}

\subsection{Stable L\'evy distributions of index $\al\neq 1$}\label{exoticalneq1}
In integrals \eq{Pp} and \eq{PF}, we make the same change of variables as in Section \ref{exotical1},
and choose the same strip of analyticity $S_{(\om-d,\om+d)}$. 
We omit the details because in our numerical experiments, the polynomial acceleration
required less time than the sub-polynomial one.

\section{Numerical examples}\label{numer}
\subsection{General remarks}
The calculations in the paper
were performed in MATLAB 2017b-academic use, on a MacPro with a
2.8 GHz Intel Core i7 and 16 GB 2133 MHz LPDDR3 RAM. 
 Errors $Err$ are differences between $p(x)$ (or $F(x)$) calculated for the parameters of the scheme indicated in the tables and the benchmark; the latter 
satisfies the error tolerance  smaller than the errors shown.
In all cases, $\sg=0.001$, and $\al, \be, x'$ vary. Thus, $x'\approx -5000$ in Table 1 corresponds to $x'\approx 5\cdot 10^6$ with the normalization $\sg=1$. The CPU time is in microsec.
In all cases, the location parameter $\mu$ is 0 in the $S_0$ Nolan's parametrization,
 hence, $x$ in the $S_0$ parametrization and $x'$ are related as $x'=x+\sg\be\tan(\pi\al/2)$. 
 
 In order that the reader could compare the results in the paper to the results produced by 
 John Nolan's program stable.exe (N) (available at http:\\ //fs2.american.edu/jpnolan/www/stable/stable.html),\\
  in all the tables but two we calculate pdf and cpdf as functions of $x$. As we explained in the main
  body of the text, many computational difficulties arize in a small neighborhood of $x'=0$;
  to illustrate these difficulties, in Table 4 and 5, we show pdf and cpdf as functions of $x'$.

The tables illustrate the following practical implications of the theoretical analysis
of errors of the methods of the paper: if $\al\in (0,1)$, the serious problems 
 are in a right vicinity of $x'=0$ if $\be>0$ and a left one if $\be<0$; the vicinity becomes rather large if $\al<1$ is close to 1. 
If $\al>1$ is close to 1, then 
the number of terms becomes very large  if $\be$ and $x'$ are of the opposite sign, and $|x'|$ is large.
 The same problem appears when $\al=1$. Thus, the bad region is where $x'\be\tan(\pi\al/2)<0$, and $|x'|$ is small if $\al\in (0,1)$ and large if $\al\in [1, 2)$.
As a side remark: this observation explains why it may be more natural to use $-\be$ instead of $\be$ for $\al\in (0,1)$; then, in all cases,
the bad region from the point of view
of difficulties for the numerical realizations  is a subset of a region where $x'\be<0$. In Tables 7 and 8, we show that accurate calculations in this region are difficult not only for
the methods of the present paper but for popular methods as well. In some cases, the errors of the popular methods are sizable. For  pdf, the errors are less pronounced, and, typically, if $\al$ is not small, then, in the regions that are good for our methods,
the errors of (N) are of the same order of magnitude as of the methods of the current paper. If $\al$ is close to 1, then
the methods of the present paper are more accurate. 

We compare the performance of several realizations of the simplified conic trapezoid rule:
\vskip0.1cm
\noindent
$ConicE(k_\ga, N_{sc}, ord)$: simplified conic trapezoid rule with the universal choice e.-f. of the cone of analyticity. The $k_\ga$ means that,
in the general procedure for the parameter choice,
$\ga^\pm_0$ replaced with $k_\ga\ga^\pm_0$, where $k_\ga\in\{1,1.02,1.05,1.1\}$, the scaling  $\xi\mapsto 10^{N_{sc}}\xi$ is made, and 
the Taylor expansion of order {\em ord} is used in the truncated part of the left tail in the simplified conic part.
\vskip0.1cm
\noindent
$ConicC(k_\ga, N_{sc}, ord)$: simplified conic trapezoid rule with the universal choice c.-d. of the cone of analyticity; the
meaning of $k_\ga, N_{sc}, ord$ is the same.
\vskip0.1cm
\noindent
$ConicA(N_{sc}, ord)$: simplified conic trapezoid rule with the universal choice $\om=\pm\pi/2$ of the line of integration;
the meaning of $N_{sc}, ord$ is the same.

\vskip0.1cm
\noindent
In the cases $\al=1$ and $\al$ close to 1, we also include the results obtained with the subpolynomial method SubP and polynomial P, in the bad region where
the simplified conic trapezoid rule is very inefficient. 

\vskip0.1cm
In the tables, $\ze$ is the mesh size,  $-N_-$ and $N_+$ are the bounds in the sum of the simplified
conic trapezoid rule; the total number of terms is $N_++N_-$. Due to the rescaling, in some cases,
$N_+$ is negative. Unless otherwise stated, $\ze, N_\pm$ 
 are chosen using the general prescriptions in the paper. In many cases, the total number of terms and the CPU time
can be made smaller, sometimes, significantly.

In the tables, pdf and cpdf values used as the benchmark are obtained using one of the realizations
of the simplified conical trapezoid rule. The results obtained with these realizations agree very well (typically, the absolute differences are of the order of
$10^{-15}-10^{-18}$ for pdf and $10^{-12}-10^{-15}$ for cpdf)
 with the ones obtained with the other realizations,
hence, when different contours of integration and different additional sources of errors.
  In the tables, typically, $\ze$ is in the range 0.9-1.1 times the recommended
by general prescriptions, $\La_-$ is 1.2-1.3 times larger,
and $\La_+$ is as recommended, for the error tolerance $10^{-15}$.

\subsection{Case of $\al\in (0,1)$}
Tables \ref{tab:1}-\ref{tab:2}
demonstrate that if index $\al$ is small, tail calculations 
of pdf and cpdf are easy and can be made fast.
A neighborhood of 0 where accurate calculations require very large $N_\pm$ shrinks as $\al\to 0$
but, in this neighborhood, $N_\pm$ and CPU time may become very large.
The scaling can help  to alleviate this problem if $\al$ is not close to 1. 
The error tolerance of the order of $10^{-15}$ and even less
can be satisfied with $100-300$ terms; the CPU time is 12-30 microseconds for the pdf; for cpdf, the CPU time is 2-3 
times larger. 

Typically, for $ConicA(N_{sc}, ord)$, the CPU time
is approximately 3 times less than the CPU time for $ConicE(k_\ga, N_{sc}, ord)$, because we can use especially simple formulas,
which require operations over reals only. $ConicC(k_\ga, N_{sc}, ord)$ uses the most conservative choice of the cone of analyticity,
hence, both $\ze$ and $\La_\pm=N_\pm\ze$ are larger; the CPU time is larger as well. 

\begin{table}
\caption{PDF and errors of $ConicA(Nsc, 2)$  with the meshes
recommended for the error tolerance e-15. $Nsc=-3, -1, 4, 0$ for $x\in [-5000,-100]$, $(-100,-5)$,
$[-0.0025, 0.0025]$, $[5,100]$, respectively.
$S_0$-parameters: $\al=0.15$, $\be=0.75$, $\sigma=0.001$, $\mu=0$.
Benchmark values are calculated using $ConicE(1.1, Nsc, 2)$   with  meshes longer and finer than recommended.
T: CPU time in microsec., the average over 100k runs. Mesh size $\ze$ is rounded. }
\label{tab:1}       
\begin{tabular}{lllllll}
\hline\noalign{\smallskip}
$x$ & $p$ & $\ze$ & $N_-,N_+$ & Err & T \\
\noalign{\smallskip}\hline\noalign{\smallskip}
-5000 & 3.11318963730012e-7 & 0.13 &	192,27 & e-20 &	15\\
-3000 & 5.55907874099697e-7 & 0.13 &	194,33 &	e-20	& 14 \\
-1000 & 1.93023496327088e-6 & 0.13&	198,33 &	e-20&	14\\
-100 &2.59229551150544e-5 & 0.13 & 203,68 & e-20 & 15\\
-50 & 5.64483170567281e-5	&		0.13 &	204,24 &	e-18 &	15\\
-5 &	7.36841595407147e-4 &			0.13 &	211,52 &	e-18 &	18\\
-2.5e-3& 2.81289214828798 & 0.12 &	228,7 & e-15 & 17\\
-e-3 & 8.07337068614118	& 0.12 &	232,19 & e-14	&18\\
-e-4 & 581.201482282709	 & 0.12 &	237,38 &	e-15	&19\\
0 & 267.419034150846 &  0.12 &	236,33 &	e-15	&19\\
e-4&	173.7956347186	& 0.12 & 335,29 &	e-15	&19\\
e-3 &	41.3125849331846	& 0.12 & 231,15&	e-15	&16\\
2.5e-3 &	17.8476636093813	& 0.12 & 227,6 &	e-13	&15\\
5 & 5.263762423550393e-3	&		0.13&	211,20&	e-17	& 16\\ 
50 &4.010585652677472e-4	&		0.13 &	201,-1	& e-16& 13\\
100 & 1.83927301369793e-4	&		0.13 &	199,-7	& e-15 &13\\
\noalign{\smallskip}\hline
\end{tabular}
\end{table}

\begin{table}
\caption{CPDF and errors  of $ConicE(1.0, 4, 2)$ for $|x|\ge 0.01$ and $ConicE(1.0, 5, 2)$ for $|x|< 0.01$, with the meshes
2 times finer and longer than recommended for the error tolerance e-15.
$S_0$-parameters: $\al=0.15$, $\be=0.75$, $\sigma=0.001$, $\mu=0$.
Benchmark values are calculated using $ConicE(1.1, 4, 2)$ and $ConicE(1.1, 5, 2)$, respectively, with  meshes longer and finer than recommended.
T: CPU time in microsec., the average over 100k runs. Mesh size $\ze$ is rounded.}
\label{tab:2}       
\begin{tabular}{lllllll}
\hline\noalign{\smallskip}
$x$ & $F$ & $\ze$ & $N_-,N_+$ & Err & T \\
\noalign{\smallskip}\hline\noalign{\smallskip}
-0.05 & 0.049667225184202	&	0.10 &	744,-24 &	e-10 &	73 \\ 
-0.01 & 0.059283790693812 &	0.10 &	702,-8	&    e-10 &	72\\
-e-3 & 0.0753320547491334 & 0.10 &	708,-1 &	e-10 &	74\\
-2e-4 & 0.0987472059268357 & 0.09 & 680,32 &e-10 & 73\\
-e-4 & 0.345499975862968	&	0.09 &	697,22 & e-10 &	73\\
0 &  0.383929540797937	&	0.10 &	703,14	&e-10 &	73 \\
e-4 & 0.405323108653059 & 0.10 &705,-10 & e-10 & 73\\
2e-4 &	0.420220377372895	&	0.10 &	706,7	& e-10 &	73\\
e-3 & 0.475668768917641 & 0.10 & 708,-5 & e-10 & 75\\
0.01 &		0.577540169623405	&	        0.10 &	702,-8	& e-10	&      72\\
0.05 &	0.646279180194864	&	0.10 &	704, -24 &	e-10 &	72\\
\noalign{\smallskip}\hline
\end{tabular}
\end{table}

\subsection{Case $\al\in (0,1)$, $\al$ very close to 1}
Table \ref{tab:3} demonstrates that even if $\al$ is very close to 1, namely, $\al=0.998$, and $\be=0.75$, that is,
the asymmetry is sizable, accurate and fast calculations in the tails are possible. 
The real difficulties arise when $x'>0$ is small (Tables \ref{tab:4}-\ref{tab:5}). Since $\al$ is very close to 1,
the rescaling helps if we use the safest choice
of the cone but not other choices.  However, if the polynomial acceleration is used, then the number of terms decreases by two orders of magnitude,
and accuracy and speed of calculations increase as $x'\to 0$ (see Table \ref{tab:5b}).
In Table \ref{tab:6}, we compare the results with John Nolan's program stable.exe.
\begin{table}
\caption{PDF and errors of $ConicE(1.0, -2, 2)$ with the mesh
recommended for the error tolerance e-15.
 $S_0$-parameters: $\al=0.998$, $\be=0.75$, $\sigma=0.001$, $\mu=0$.
Benchmark values are calculated using $ConicE(1.1, -2, 2)$ with the mesh longer and finer than recommended.
T: CPU time in microsec., the average over 100k runs. Mesh size $\ze$ is rounded.}
\label{tab:3}       
\begin{tabular}{lllllll}
\hline\noalign{\smallskip}
$x$ & $p$ & $\ze$ & $N_-,N_+$ & Err & T \\
\noalign{\smallskip}\hline\noalign{\smallskip}
-100 & 8.13536349845171e-9 &			0.19&	29,19 &	e-18	& 9.8 \\ 
-50 & 3.24934924707529e-8	&		0.17 &	29,23 &	e-18 &	9.6&\\
-25 & 1.297726494011055e-7	&		0.17 &	28,27 & e-20	& 9.9\\
-5 & 3.23031522416717e-6 &			0.18&	27,38 &	e-18&	11\\ 
5 & 2.26783179758502e-5	&		0.18&	27,38 &	e-20	& 12\\
25 & 9.09052669268316e-7 &			0.17 &	28,27 & e-20	& 10\\
50 & 2.27541207991646e-7	&		0.17&	29,23	& e-18	& 9.9 \\
100 & 5.69591734267896e-8	&		0.17&	29,19	&  e-17 &	9.4 \\
\noalign{\smallskip}\hline
\end{tabular}
\end{table}

\begin{table}
\caption{PDF and errors of $ConicE(1.0, 0, 2)$ with the mesh
recommended for the error tolerance e-15, for $x'$ close to 0 ($x\in (-0.2407,-0.2367)$).
 $S_0$-parameters: $\al=0.998$, $\be=0.75$, $\sigma=0.001$, $\mu=0$.
Benchmark values are calculated using $ConicE(1.1, 0, 2)$ with the mesh longer and finer than recommended.
T: CPU time in microsec., the average over 100000 runs. 
Mesh size $\ze$ and $N_\pm$ for $x'>0$ are rounded. }
\label{tab:4}       
\begin{tabular}{lllllll}
\hline\noalign{\smallskip}
$x'$ & $p$ & $\ze$ & $N_-,N_+$ & Err & T \\
\noalign{\smallskip}\hline\noalign{\smallskip}
-2e-3 &1.36221356071656e-3 &		0.08 &	98,155 &	e-15& 42 \\ 
-1e-3 & 1.37350953475699e-3	&	0.09 &	98,155 &	e-14 & 42\\
-1e-4 & 1.38379659864617e-3	&	0.09 &	98,155 & e-14	& 40\\
1e-4 & 1.386099077465777e-3	&	e-4 &	110k,383k & e-11 &	29k\\
1e-3 & 1.39652771252394e-3 &	e-4 &	110k,383k	& e-9 & 30k\\
2e-3 &1.4082546893007e-3	 &	e-4 &	110k,383k & e-10&	29k\\
\noalign{\smallskip}\hline
\end{tabular}
\end{table}

\begin{table}
\caption{CPDF and errors of $ConicC(1.0, 3, 2)$ with the mesh
recommended for the error tolerance e-15, for $x'$ close to 0 ($x\in (-0.2407,-0.2367)$).
 $S_0$-parameters: $\al=0.998$, $\be=0.75$, $\sigma=0.001$, $\mu=0$.
Benchmark values are calculated using $ConicC(1.1, 3, 2)$ with the mesh longer and finer than recommended.
T: CPU time in microsec., the average over 10k runs. 
Mesh size $\ze$ and $N_\pm$ for $x'>0$ are rounded. }
\label{tab:5}       
\begin{tabular}{lllllll}
\hline\noalign{\smallskip}
$x'$ & $F$ & $\ze$ & $N_-,N_+$ & Err & T \\
\noalign{\smallskip}\hline\noalign{\smallskip}
-2e-3 &0.33125085132895e-3 &		0.21 &	95,-9 &	e-18 &	24 \\
-1e-3 & 0.33261870120455e-3 &		0.21 &	95,-9 &	e-18&	16\\
-1e-4 & 0.33385948032056e-3	&	0.20 &	95,-9	& e-17	& 16\\
1e-4 & 0.33413646971679e-3	&	2.3e-4 &	81k,19k &	e-17 &	3.2k\\
1e-3 & 0.33538864255410e-3	&	2.4e-4 &	82k,18k &	e-15	& 3.2k\\
2e-3 & 0.33679102124245e-3	&	 	2.5e-4 &	82k,18k & e-16&	3.5k\\
\noalign{\smallskip}\hline
\end{tabular}
\end{table}

\begin{table}
\caption{CPDF and errors of polynomial acceleration with the parameters $(a,m,b)=(1,2,1)$ and the mesh
recommended for the error tolerance e-15, for $x'$ very close to 0.
 $S_0$-parameters: $\al=0.998$, $\be=0.75$, $\sigma=0.001$, $\mu=0$.
Benchmark values are calculated using several sets of parameters $(a,m,b)$ and fine and long grids; the
differences are smaller than e-12.
T: CPU time in microsec., the average over 10k runs. 
Mesh size $\ze$ is rounded. }
\label{tab:5b}       
\begin{tabular}{lllllll}
\hline\noalign{\smallskip}
$x'$ & $F$ & $\ze$ & $N_-,N_+$ & Err & T \\
\noalign{\smallskip}\hline\noalign{\smallskip}
e-5 & 0.33401176744949e-3 & 0.07 & 91,2084	& e-12 &295\\ 
5e-5 & 0.33406717919133e-3 & 0.07 & 158,2084 & e-13 &	304\\
e-4 & 0.33413646970775e-3 & 0.07 &	205,2084 & e-12 &	321\\
2e-4 & 0.33427513717746e-3 & 0.07 &	269,2084 & e-12 &	347\\
e-3 & 0.33538864255737e-3 & 0.08 &	267,1823 &	e-12 &		327\\
2e-3 & 0.33679102125297e-3& 0.07 & 352,1823 &	e-12 &362\\
\noalign{\smallskip}\hline
\end{tabular}
\end{table}

\begin{table}
\caption{CPDF and relative errors of John Nolan's program stable.exe (N) and $ConicE(1, 3, 2)$ (for $|x|\le 5$),
and $ConicE(1, 6, 2)$ (for $x\ge 5$) with the mesh 50\% finer and larger than
recommended for the error tolerance e-15, w.r.t. $\min\{F, 1-F\}$.
 $S_0$-parameters: $\al=0.998$, $\be=0.75$, $\sigma=0.001$, $\mu=0$.
Benchmark values are calculated using $ConicE(1, 3, 2)$ 
and $ConicE(1, 6, 2)$, respectively, with the meshes longer and finer than recommended.
$T_E$: CPU time in microsec., the average over 10k runs.  }
\label{tab:6}       
\begin{tabular}{llllll}
\hline\noalign{\smallskip}
$x$ & $F$ &  $Err_E$  & $T_E$& $Err_N$ \\
\noalign{\smallskip}\hline\noalign{\smallskip}
-100	& 	8.15206374458673e-7 & e-13 &37 & -0.024\\
-50 & 1.62807802859660e-6 & e-13 & 35 & -0.022\\
-5	& 	1.61949951656763e-5 & e-14 & 36& -0.018\\
-2.5 &		3.23243097796957e-5  &e-14 & 47 &-0.017\\
-0.5	& 	1.60438900411786e-4& e-15 &40 & -0.013\\
-0.1		& 7.88201747983219e-4	& e-10 & 2.0k &-0.01\\
0 &		0.402108433490376	      & e-14 & 53 & 0.0035\\
0.1	& 	0.994257893316732 & e-12 & 45 & 0.035\\
0.5	& 	0.998864393911454	& e-11 & 44 & 0.019\\
2.5 & 0.999773085851662	& e-11 & 44 & 0.018\\
5	& 	0.999886458587786	& e-14 & 41 & 0.019\\
50 & 0.999988601171594 & e-10 & 39 & 0.023\\
100 & 0.999994292945519	& e-8 & 38 & 0.024\\
\noalign{\smallskip}\hline
\end{tabular}
\end{table}

\subsection{Case $\al=1$} To save space, we show the results for the cpdf only. 
In a certain sense, the case $\al=1$ is extreme.  As $x'<0$ decrease, the number of terms in 
the simplified conic trapezoid rule increases and becomes extremely large;
and the CPU time is measured in seconds; if $x'>0$, and not too small, it suffices to sum up several dozen of terms to satisfy the error tolerance of the order 
of E-15 and smaller; the CPU time is about 5 microseconds. Sub-polynomial acceleration requires
about 1 msec, and the number of terms and CPU time are of the same order of magnitude as $x'$ varies from -5 to -200.
See Tables \ref{tab:7}-\ref{tab:7b}.

\begin{table}
\caption{CPDF and relative errors (w.r.t. $\min\{F, 1-F\}$) of John Nolan's program stable.exe (N) and $ConicE$ with the mesh
recommended for the error tolerance e-15.
 $S_0$-parameters: $\al=1$, $\be=0.25$, $\sigma=0.001$, $\mu=0$.
Benchmark values are calculated using the general recommendation with the scale parameter $10^4$
 for $x'<-3$, and 1 for $x>-3$. 
 Errors of the benchmark: of the order of e-10-e-11 for $x'>-3$, and of the order of e-11-e-15 for $x'<-3$.
 $N_++N_-$ decreases as
$x$ increases: at $x=-1$, $\approx 110 mln$, at $x=-1$,
$\approx 50k$, at $x=-0.01$, $\approx 6k$, at $x=0.01$, $N_\pm=35$, and at $x=1000$, $N_+=42$ and $N_-=3$.
$T$: CPU time in microsec., the average over 1000 runs. 
}\label{tab:7}       
\begin{tabular}{llllll}
\hline\noalign{\smallskip}
$x$ & $F$ &  $Err_E$  & $T$& $Err_N$ \\
\noalign{\smallskip}\hline\noalign{\smallskip}
-200 &		1.19365074579989e-6	& 3.6e-8 & 3.6mln & 4.2e-6\\
-80	&  	2.98409066904609e-6	& 9.1e-10 &1.4mln & 1.1e-5\\
-5	&	4.77341833448053e-5	& 2.1e-11 & 62k & 1.7e-4\\
 -1	&	2.38486189482856e-4	& -1.1e-10 & 11k & 7.5e-11\\
 -0.1	&	2.37147293339784e-3 & -1.4e-11 & 1.1k & 2.4e-12\\
  -0.01 &		0.0231302631073184	& 3.6e-11 &145 & 3.2e-13\\
  0	&	0.470104449706134	& 1.9e-14 &75 & 4.5e-13\\
0.01	 &	0.959213200641451 & 1.3e-13 &6.2 & 4.5e-14\\
1	&	0.999601701819188 & 1.0e-11 &5.1 & 8.5e-4\\
10 &	 	0.999960205698953	&  e-14 & 5.0 &8.5e-5\\
20	&	0.999980104131106	&		e-14	&	5.8  &	 4.3E-05\\
50	&	0.99999204198948	&	5.2e-9 & 4.5 & 1.4e-10\\
250	&	0.999998408438404	& e-14 & 4.7 & 5.4e-12\\
1000	&	0.999999602111794 & e-14 &  4.8 & 3.4e-13\\

\noalign{\smallskip}\hline
\end{tabular}
\end{table}

\begin{table}
\caption{CPDF and relative errors of sub-polynomial acceleration with a moderate number of terms.
Relative errors of the benchmark values are in the range $2e-14-2e-13$.
The parameters of the distribution are as in Table \ref{tab:7}.
CPU time  (the average over 10k runs) is in the range 0.9-1.2 msec }\label{tab:7b}
\begin{tabular}{llllll}
\hline\noalign{\smallskip}
$x$ & $F$ & $\ze$ & $N_-,N_+$ &  Err \\
\noalign{\smallskip}\hline\noalign{\smallskip}
-200 & 1.19365074579989e-6 & 0.13 &	3936,	74 &  -1.2e-7\\
-140 & 1.70520938216014e-6 & 0.15 &3478,	67 & -7.6e-8\\
-80 & 2.98409066904609e-6 & 0.17  &3140,	62 & -6.0e-8\\
-20 & 1.19357202286944e-5 &0.17 & 3118,	63 & 6.2e-13\\
\noalign{\smallskip}\hline
\end{tabular}
\end{table}

\subsection{Case $\al>1$} If $\al$ is not very close to 1 and $x'$ is not too large in absolute value, several hundred of terms of the safest realization suffice;
the CPU time is about 20-40 microseconds. See Tables \ref{tab:8} and \ref{tab:9}. The differences with (N) are of the same order of magnitude as the errors shown in the tables.

\begin{table}
\caption{PDF and errors of $ConicC(1,6, 2)$ with the mesh
recommended for the error tolerance e-15.
$S_0$-parameters: $\al=1.3$, $\be=0.25$, $\sigma=0.001$, $\mu=0$.
Benchmark values are calculated using $ConicC(1,1, 2)$ with  the mesh longer and finer than recommended.
T:  CPU time in microsec., the average over 10k runs. Mesh size $\ze$ is rounded.}
\label{tab:8}       
\begin{tabular}{lllllll}
\hline\noalign{\smallskip}
$x$ & $p$ & $\ze$ & $N_-,N_+$ & Err & T \\
\noalign{\smallskip}\hline\noalign{\smallskip}
-125	& 	4.6979494046576e-10 & 0.044 &	307,180 &	2e-14 &	57\\
-25 &	1.90316902311684e-8	&	0.044 &	280,180 &	1e-14 &	53\\
-5	&	7.70985222878323e-7	&	0.044 &	252,180 &	-2e-13	& 48\\
-1	&	3.12191495821328e-5 &	0.044 &	225,180 &	-2e-14 &	52\\
-0.1	& 6.20796148088551e-3 &		0.044 &	186,180 &	2e-14	& 43 \\
0.5	& 2.57016731832103e-4	 &	0.081 &	116,93	& 6e-14	& 34\\
5	& 	1.28549969289457e-6 &		0.081 &	138,93 	& 9e-14	& 44\\
100	&	1.30803394394121e-9	&	0.081 &	166,93	 & -5e-14 &	37\\
250	&	1.58930738319053e-10	&0.081 &	174,93	 & -5e-15 &	39\\
\noalign{\smallskip}\hline
\end{tabular}
\end{table}
\begin{table}
\caption{CPDF and errors of $ConicC(1,1, 2)$ with the mesh
recommended for the error tolerance e-15.
$S_0$-parameters: $\al=1.3$, $\be=0.25$, $\sigma=0.001$, $\mu=0$.
Benchmark values are calculated using $ConicC(1,1, 2)$ with  the mesh longer and finer than recommended.
T: CPU time in microsec., the average over 10k runs. Mesh size $\ze$ is rounded.}
\label{tab:9}       
\begin{tabular}{lllllll}
\hline\noalign{\smallskip}
$x$ & $F$ & $\ze$ & $N_-,N_+$ & Err & T \\
\noalign{\smallskip}\hline\noalign{\smallskip}
-250	&	1.83438084722098e-8	&	0.05 &	460,204 &	-e-15 	& 51 \\
-100	& 6.03684435773744e-8	&	0.05 &	441,204 &	-e-15& 51\\
-5	& 	2.96555322687464e-6	&	0.05 &	381,204	& -e-15 &	48\\
-0.5	&	5.91273879323451e-5	&	0.053 &	361,149 &	-e-12 &	42\\
-0.1	&  4.78178901456405e-4 &		0.053 &	331,149 &	 e-15	& 43 \\
0	&	0.475780098542004	&	0.049&	245,160 &	 e-15	& 39 \\
0.1	&	0.999195614410308 &		0.097 &	180,77 &	e-15 &	33\\
5	&	0.999995056257044	&	0.092 &	208,107 &	e-15&	38\\
100	&	0.999999899384689	&	0.092 &	241,107 &	e-15 &	37\\
250	&	0.999999969426835	&	0.092 &	251,107 &	e-15 &	38\\
\noalign{\smallskip}\hline
\end{tabular}
\end{table}

\subsection{Calculation of quantiles in the tails}  In Tables \ref{tab:10} -\ref{tab:12}, we calculate quantiles of the completely asymmetric stable L\'evy
process with the parameters $\al=0.7, \sg=0.001, \be=-1$, and $\al=0.15, \sg=0.001, \be=-1$. In the case $\al=0.7$, the rate of the tail decay is not exceedingly low,
and we calculate the quantiles $x_a$ in the range $[-10^{4},-200]$, for $a\in [10^{-5}, 10^{-4}]$. In the case $\al=0.15$, the tail decays extremely slow. We show the results
for $a\in [0.23, 0.56]$ and $a\in [0.105, 0.15]$. In the latter case, $x_a\in [-1500, -110]$; the results for larger regions further in the tail are similar: the same sets of conformal principal components
can be used for quantile calculations, hence, for Monte Carlosimulations over very large regions in the tail. The same is true for the case $a\in [0.23, 0.56]$, where the ratio
$x_{0.23}/x_{0.56}>2000$ is very large. In this particular case, the Newton method works but if the parameters of the process and $a$ are such that the Newton method is not applicable
(suppose, we do not know if $a$ is to the left of the inflection point), we can apply the bisection method. The results in the tables demonstrate that even if the initial guess for
the quantile is extremely rough ($x_a\in [-10000, -200]$ or $x_a\in [-1500, -110]$), the CPU time is only several times larger than in the case when the Newton method is applied.
In Tables \ref{tab:10}-\ref{tab:12}, we use the algorithm in Sect. \ref{algquantile} for stable processes of index $\al\in (0,1)$. 

\begin{table}
\caption{Quantiles $x_a$, errors w.r.t. the benchmark and  CPU time.
$S_0$-parameters: $\al=0.15$, $\be=-1$, $\sigma=0.001$, $\mu=0$.
In all cases, $ConicA(N_{sc}, 2)$ is used to calculate the conformal principal components
for the interval $[-10000, -200]$ where the quantiles are.
For the Newton method, the initial approximation is $x_0=-200$;
 for the bisection method, the initial interval is $[-10000, -200]$.
Benchmark: the Newton method is applied to $\ln F(x)-\ln a=0$; $N_{sc}=-2, \ze=0.165,	N_-=154, N_+=	19$.
A: the Newton method is applied to $\ln F(x)-\ln a=0$;  number
of iterations is in the range $[5, 8]$. B: the bisection method; number of iterations 44. In cases A,B, 
$N_{sc}=4, \ze=0.167, N_-=276, N_+=-69$. Time: CPU time in microsec., the average over 100k runs.}
\label{tab:10}       
\begin{tabular}{lllllll}
\hline\noalign{\smallskip}
$a$ & $x_a$ & $Err_A$ & $T_A$ & $Err_B$ & $T_B$ \\
\noalign{\smallskip}\hline\noalign{\smallskip}
1e-5 &	-8973.08850717177 &			e-12	& 	37 &	e-12	&	181 \\
2e-5 &	-3333.5455711492 &			e-11 &		36 &			e-11 &	176\\
3e-5 &	-1867.90468266833	& 	e-12 &		 31 &		e-12	&	173\\
 5e-5 &	-900.414225337066	 &	e-12&	32 &			-e-12	&	171\\
7e-5	& -556.803989377748 &		e-12	& 27 &			e-12	&	169 \\
e-4 & -334.530078488661 & e-12 & 22 & -e12 &167 \\  
\noalign{\smallskip}\hline
\end{tabular}
\end{table}

\begin{table}
\caption{Quantiles $x_a$, errors w.r.t. the benchmark and  CPU time.
$S_0$-parameters: $\al=0.15$, $\be=-1$, $\sigma=0.001$, $\mu=0$.
In all cases, $ConicA(N_{sc}, 2)$ is used to calculate the conformal principal components
for the interval $[-1500, -100]$ where the quantiles are.
For the Newton method, the initial approximation is $x_0=-70$;
 for the bisection method, the initial interval is $[-1500, -70]$.
Benchmark: the Newton method is applied to $\ln F(x)-\ln a=0$; $N_{sc}=4, \ze=0.145,	N_-=498, N_+=	-71$.
A: the Newton method is applied to $\ln F(x)-\ln a=0$;  number
of iterations is in the range $[5, 8]$. B: the bisection method; number of iterations 41. In cases A,B, 
$N_{sc}=1, \ze=0.159,	N_-=313, N_+=	-18$. Time: CPU time in microsec., the average over 100k runs.}
\label{tab:11}       
\begin{tabular}{lllllll}
\hline\noalign{\smallskip}
$a$ & $x_a$ & $Err_A$ & $T_A$ & $Err_B$ & $T_B$ \\
\noalign{\smallskip}\hline\noalign{\smallskip}
0.105 &	-1400.22243921946 &		2.8e-7	& 	72 &			7.9e-8	& 338 \\
0.115 &	-737.220889689652 &			1.4e-7	& 	62 &			3.8e-8	& 340\\
0.125 &	-408.160088631267 &			6.9e-8	&  60 &			2.0e-8 &	331\\
0.135 &	-235.779703690701 &			3.7e-8 &		51 &			1.1e-8 &	333\\
 0.15	 & -110.643637607915 &			1.6e-8 &		42 &			4.5e-9 &	329 \\  
\noalign{\smallskip}\hline
\end{tabular}
\end{table}

\begin{table}
\caption{Quantiles $x_a$, errors w.r.t. the benchmark and  CPU time.
$S_0$-parameters: $\al=0.15$, $\be=-1$, $\sigma=0.001$, $\mu=0$.
In all cases, $ConicA(N_{sc}, 2)$ is used to calculate the conformal principal components
for the interval $[-5, -0.002]$ where the quantiles are.
For the Newton method, the initial approximation is $x_0=-0.002$;
 for the bisection method, the initial interval is $[-5, -0.002]$.
Benchmark: the Newton method is applied to $\ln F(x)-\ln a=0$; $N_{sc}=10$, $\ze=0.128$,	$N_-=632$, $N_+=	-112$.
A: the Newton method is applied to $\ln F(x)-\ln a=0$; $N_{sc}=8, \ze=0.155, N_-=388, N_+=-6$, number
of iterations is in the range $[4, 10]$. B: the bisection method; $N_{sc}=8, \ze=0.155, N_-=388, N_+=-6$,
number of iterations 33. Time: CPU time in microsec., the average over 100k runs.}
\label{tab:12}       
\begin{tabular}{lllllll}
\hline\noalign{\smallskip}
$a$ & $x_a$ & $Err_A$ & $T_A$ & $Err_B$ & $T_B$ \\
\noalign{\smallskip}\hline\noalign{\smallskip}
0.23 &	-4.72813632353329	& 		6.1e-9 &		92 &		6.2e-9 &	292\\
0.26	& -1.85093751685119&			2.1e-9 &	81 &		2.2e-9 &	278\\
0.29	& -0.789000640538996 &				8.3e-10 &	82 &	8.5e-10 &	285\\
0.35	& -0.173015534351966 &			1.6e-10	&  72 &		1.6e-10 &	276\\
0.44 &	-0.0241736559178538	&		1.9e-11 &	62 &		2.0e-11 &	280\\
0.5	& -7.30329034715694e-3&		5.6e-12	& 44 &		5.7e-12 &	276\\
0.53	 &-4.06193959209065e-3	& 	3.1e-12 &		44 &		3.1e-12 &	276\\
0.56	&-2.24338436545957e-3 &			-7e-12 &	35 &		-7e-12	& 275 \\  
\noalign{\smallskip}\hline
\end{tabular}
\end{table}

\section{Conclusion}\label{concl}
In the paper, we suggested three families of conformal deformations of the contours of integration
and the corresponding changes of variables in the Fourier
representations of the pdf and cpdf of stable L\'evy distributions and their derivatives w.r.t. $x$ and the
parameters of a stable L\'evy distribution, and developed numerical schemes for efficient numerical evaluation of the resulting integrals. 
 An appropriate change of variables $\xi=\xi(y)$ having being made, we apply the simplified trapezoid rule in the $y$-coordinate
\[
I=\ze\sum_{j=-N_-}^{N_+}f(y_j),\]
where $y_j=i\om+j\ze$, and $\om\in\bR$.
The first family increases exponentially the rate of decay of the integrand at infinity
(exponential acceleration), the second one leads to a polynomial increase of the rate of decay (polynomial acceleration),
and the third one increases the rate of decay but slower than polynomially (sub-polynomial acceleration).
Each family is divided into two: the first sub-family can be applied to completely asymmetric stable L\'evy distributions and other L\'evy distributions
whose characteristic functions admit analytic continuation to a union of a strip  around the real axis or adjacent to it and a cone,
while the second sub-family is applicable to general stable L\'evy distributions. 
For sub-families of the first kind, the rate of decay of the integrand in the new variable $y$ is the same as $y\to\pm\infty$
(we call them two-sided versions of the corresponding families), and one can take $N_-=N_+$. 
For sub-families of the second kind, the rate of decay as $\Re y\to-\infty$ is smaller than the rate of decay as $\Re y\to+\infty$,
hence, typically, $N_- \gg N_+$. Fortunately, one can derive asymptotic expansions for the truncated part of the infinite trapezoid rule at $-\infty$,
 calculate the corresponding sums, and significantly decrease $N_-$. In the case of the one-sided exponential acceleration,
 these sums are easy to calculate, and in the cases of the polynomial and sub-polynomial accelerations, the leading terms are expressible in terms
 of Riemann zeta function and its derivatives, which can be precalculated.
 
The exponential and polynomial accelerations are possible if the initial integrand admits analytic continuation to a cone, and decays
as $\xi\to \infty$ remaining in the cone; sub-polynomial acceleration works in some important cases where such a cone does not exist.
Clearly, if all types of acceleration are applicable, then, asymptotically, exponential acceleration is superior to polynomial acceleration, and the latter is superior to sub-polynomial one. However, for a fixed error tolerance (even very small, e.g., e-15), the mesh size $\ze$ can make the number of terms
in the simplified trapezoid rule much larger if exponential acceleration is applied. This happens if 
the ``width of the cone" (the length  of the intersection of the cone with the unit circle) is very small; sometimes, 
the polynomial and sub-polynomial accelerations
allow one to use  $\ze$ hundreds times larger than required by the exponential acceleration.

We described the regions in the parameter space where the most efficient exponential acceleration is preferable, and where
the other types are. In our numerical experiments, we observed that the polynomial and sub-polynomial accelerations 
work well where the exponential acceleration
is very inefficient. 
If $\al\in (0,1)$, then, for wide regions in the $(\sigma,\be,x')$- space,
very simple formulas can be used to calculate $p, p', F$ and quantiles.
For processes of index $\al\in (1,2)$, appropriate changes of variables can be used to reduce calculations to
integrals similar to the ones in the case $\al\in(0.5,1)$.  In cases when exponential acceleration is inefficient,
similar changes of variables reduce calculations in the case $\al\in (1/3,1)$ to the case $\al\in (1,3)$. Of course,
the case $\al>2$ is unrelated to stable distributions but the integrals that define the pdf and cpdf make sense for $\al>2$ as well.

For a wide region in the parameter space (about 90 percent of the total),
summation of 70-300 terms gives the pdf with the absolute error of order $10^{-15}-10^{-18}$,
and the cpdf with the absolute error of order $10^{-12}-10^{-15}$;  the CPU time is in the range 0.005-0.04 msec. Only for a relatively small region in the parameter space
(index $\al\neq 1$ is very close to 1, $x'/\sigma$ is not large, and $x'\be\tan(\al\pi/2)<0$; if $\al=1$, the condition is $x'\be<0$)
the number of terms is measured in thousands and more. However, the relative area in the parameter space where more than 1 msec
is needed is less than several percent. 

The methods of the paper can be applied to evaluation of various highly oscillatory integrals and special functions, as well as to
accurate and fast evaluation of Wiener-Hopf factors,  hence, to pricing barrier options, lookbacks, American options,
and other general first passage problems. In many cases, our methods are faster and more accurate than the saddle point method and methods based on the reduction to an appropriate cut in the complex plane. 
Contrary to Gaussian quadrature
schemes and their generalizations, no precalculation of nodes and weights with sufficiently high precision is required. On the contrary,
the schemes of our methods are flexible and simple, and two realizations can be used to check the accuracy of calculations; the standard repetitions
used in adaptive quadratures are unnecessary.
For calculation in tails of a stable distribution, 100-300 precalculated values of several simple expressions in the formula for the characteristic exponent at points of an appropriately chosen
grid suffice to evaluate pdf, cpdf and quantiles in wide regions not too close to 0, very far in the tails including. We suggest to call these expressions {\em conformal principal components}.
In many cases, the CPU time is less than 0.1 msec.;
 hence, in applications
to the Monte Carlosimulations, it becomes essentially unnecessary to truncate the state space. For fat-tailed distributions,
the truncation is a serious source of errors. More involved conformal changes of variables can be designed to decrease
the number of terms and the CPU time; we leave the study of this possibility for the future.

\end{document}